\titleformat{\section}{\bfseries}{\thesection}{1em}{}
\titleformat{\subsection}{\itshape}{\thesubsection}{1em}{}
\numberwithin{equation}{section}
\newfont{\ctv}{msam10}
\newcommand{\bbox}{\mbox{\ctv \symbol{4}}}
\def\QED{{${}\hfill\bbox$}}
\newenvironment{pf}[1]{\par\vskip1mm{\noindent\it #1.}\ }{\QED\par
\vskip2mm}
\def\bpf{\begin{pf}}
\def\epf{\end{pf}}
\def\expe{\hbox{\rm e}}
\def\vrt{y}
\def\fpr{U_{k,R}}
\def\hukr{W_{k,R}}
\def\fhr{V_{k,R}}
\def\uipr{u_i^{(k,R)}}
\def\dd{\,\mathrm{d}}
\def\dive{\mathrm{\,div\,}}
\def\sign{\mathrm{\,sign}}
\def\supess{\mathop{\mathrm{\,sup\,ess}}}
\def\for{\mathrm{\ for\ }}
\def\ale{\mathrm{\ a.\,e.}}
\def\play{\mathfrak{p}}
\def\sumin{\sum_{i=1}^n}
\def\sumiz{\sum_{i=0}^n}
\def\sumim{\sum_{i=0}^{n-1}}
\def\om{^{(m)}}
\def\bfq{\boldsymbol{q}}
\def\bfn{\boldsymbol{n}}
\def\bfu{\boldsymbol{u}}
\def\bfnu{\boldsymbol{\nu}}
\def\bfsi{\boldsymbol{\sigma}}
\def\bfde{\boldsymbol{\delta}}
\def\bfxi{\boldsymbol{\xi}}
\def\real{\mathbb{R}}
\def\nat{\mathbb{N}}
\def\XX{\mathcal X}
\def\io{\int_{\Omega}}
\def\ipo{\int_{\partial\Omega}}
\def\be{\begin{equation}\label}
\def\ee{\end{equation}}
\def\ber{\begin{eqnarray}}
\def\eer{\end{eqnarray}}
\def\bers{\begin{eqnarray*}}
\def\eers{\end{eqnarray*}}
\def\bpf{\begin{pf}}
\def\epf{\end{pf}}
\newtheorem{theorem}{Theorem}[section]
\newtheorem{hypothesis}[theorem]{Hypothesis}
\newtheorem{proposition}[theorem]{Proposition}
\newtheorem{definition}[theorem]{Definition}
\begin{document}

\title{Deformable porous media with degenerate hysteresis\\in gravity field\thanks{The support from the European Union's Horizon Europe research and innovation programme under the Marie Sk\l odowska-Curie grant agreement No 101102708, from the M\v{S}MT grant 8X23001, and from the GA\v CR project 24-10586S is gratefully acknowledged.}
}

\author{Chiara Gavioli
\thanks{Faculty of Civil Engineering, Czech Technical University, Th\'akurova 7, CZ-16629 Praha 6, Czech Republic, E-mail: {\tt chiara.gavioli@cvut.cz}.}
\and Pavel Krej\v c\'{\i}
\thanks{Faculty of Civil Engineering, Czech Technical University, Th\'akurova 7, CZ-16629 Praha 6, Czech Republic, E-mail: {\tt Pavel.Krejci@cvut.cz}.}
\thanks{Institute of Mathematics, Czech Academy of Sciences, {\v{Z}}itn{\'a} 25, CZ-11567 Praha 1, Czech Republic, E-mail: {\tt krejci@math.cas.cz}.}
}

\date{}

\maketitle

%\begin{abstract}
%Hysteresis and gravity
%\end{abstract}

\begin{abstract}
Hysteresis in the pressure-saturation relation in unsaturated porous media, which is due to surface tension on the liquid-gas interface, exhibits strong degeneracy in the resulting mass balance equation. Solutions to such degenerate equations have been recently constructed by the method of convexification even if the permeability coefficient depends on the hysteretic saturation. The model is extended here to the case that the solid matrix material is viscoelastic and that the system is coupled with a gravity driven moisture flux. The existence of a solution is proved by compact anisotropic embedding involving Orlicz spaces with respect to the time variable.

\bigskip

\noindent
{\bf Keywords:} hysteresis, degenerate equation, porous media, viscoelasticity, gravity effects

\medskip

\noindent
{\bf 2020 Mathematics Subject Classification:} 47J40, 35K65, 74F10, 76S05
\end{abstract}

%%%%%%%%%%%%%%%%%%%%%%%%%%%%%%%%%%%%%%

\section{Introduction}

We pursue here the study started in \cite{colli,perme} of degenerate diffusion in unsaturated porous media filled with liquid and gas. Here we additionally take into account the deformations of the solid matrix produced by the penetrating humidity, and the effects of gravity on fluid diffusion as a generalization of the Richards equation (see \cite{bsch}). To be precise, our main modeling assumptions are the following:
\begin{enumerate}
	\item[{\rm (a)}]   {\rm The pressure-saturation relation exhibits hysteresis};
	\item[{\rm (b)}]    {\rm The solid skeleton is viscoelastic};
	\item[{\rm (c)}]    {\rm The permeability of the material depends on the moisture content};
	\item[{\rm (d)}]    {\rm A gravity-driven moisture flux takes part in the process}.
\end{enumerate}

Our model is inspired by the experimental evidence about hysteresis in hydrogeology in, e.g., \cite{alb,hol,pfb}. Figure~\ref{fi1} is taken from \cite{pfb} and shows rate-independent hysteresis between the logarithm soil suction $\psi$, which is a decreasing function of the pressure, and the volumetric water content $\theta$ which, up to a linear transformation, can be identified with the saturation. The main characteristic of the hysteresis relation in Figure~\ref{fi1} is that at turning points where the pressure changes the orientation from decreasing to increasing or vice versa, the starting slope is horizontal, which makes the diffusion problem degenerate. 

\begin{figure}[h]
	\begin{center}
		\includegraphics[width=8cm]{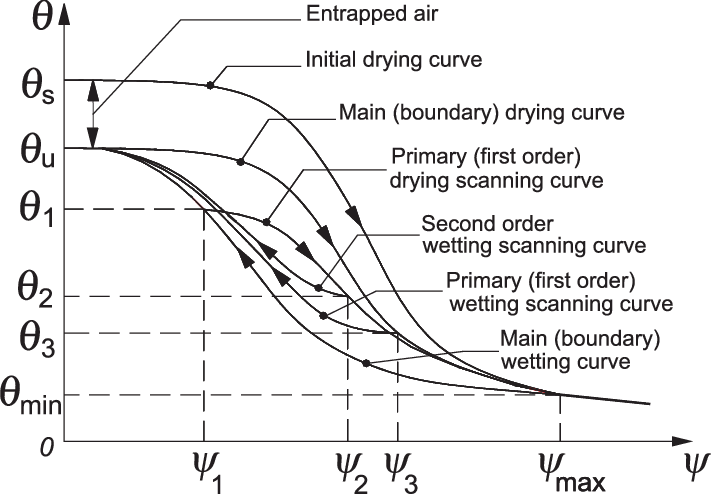}
		\vspace{5mm}
		\caption{Typical experimental hysteresis dependence in porous media between the logarithm soil suction $\psi$ and the volumetric water content $\theta$.}
		\label{fi1}
	\end{center}
\end{figure}

Mathematical investigation of diffusion problems with hysteresis goes back to A.~Visintin's pioneering monograph \cite{vis} presenting basic methods of solving PDEs with hysteresis. However, two problems have remained unsolved until recently: the degeneracy at turning points and the dependence of the diffusion coefficient on the saturation. We have proved in \cite{colli} that the degeneracy in the case of constant permeability can be overcome using a method developed there to convexify the hysteresis relation. In \cite{perme}, the convexification combined with a compact embedding theorem for anisotropic Orlicz spaces was shown to give a sufficient argument for the solvability also in the case of saturation-dependent permeability. Let us also mention some previous results on hysteresis dependence of the permeability coefficient in the non-degenerate case under additional regularization in time or space \cite{bavi1,bavi2,gosch,vis2}. To our knowledge, only E.~El Behi-Gornostaeva addressed in her thesis \cite{gorno} the full problem of hysteresis-dependent saturation without regularization in the non-degenerate case and proposed a method for the existence proof.

In this paper we extend the problem to the case of deformable porous media under gravity effects and full degeneracy of the saturation dependence. The process is driven by the mass conservation principle and the quasistatic mechanical equilibrium equation in the small deformations regime. We further simplify the system in order to make it mathematically tractable by assuming that shear stresses are negligible. The gravity term and the viscoelasticity of the deformable solid skeleton represent a new degree of complexity related to the fact that no obvious a priori upper bound for the solutions is available here. We show that such an estimate can be obtained using a hysteresis variant of the Moser iteration technique under an additional assumption on the admissible Preisach operators $G$, namely, that the Preisach density $\phi(x,r,v)$ in  \eqref{ge3} of $G$ decays sufficiently slowly at infinity, see Hypothesis~\ref{hrho}. The proof of the existence theorem is then carried out by a convexity and compactness argument developed in \cite{perme}.

The structure of the present paper is the following. In Section~\ref{mode} we set up the mathematical model for the phenomenon and show that it is consistent with the physical principles of mass and energy conservation. In Section~\ref{stat} we recall the definitions of the main concepts including convexifiable Preisach operators, and state the main existence Theorem~\ref{t1}. In Section~\ref{disc} we propose a time discretization scheme with time step $\tau>0$ and derive estimates independent of~$\tau$. A uniform upper bound for the time-discrete approximation is established in Section~\ref{unif} via Moser-type iterations. In Section~\ref{conv} we show that, similarly to \cite{perme}, we can derive the convexity estimate \eqref{dp6} for the time derivative of the pressure. As shown in the last Section~\ref{proof}, this is sufficient to let the time step $\tau$ tend to $0$ and to prove that the limit is the desired solution.

%%%%%%%%%%%%%%%%%%%%%%%%%%%%%%%%%%%%%%%%%%%%%

\section{Derivation of the model}\label{mode}

The physical quantities which appear in the model are listed below in Table~\ref{tab}.

\begin{table}[h]
	\begin{center}
		\caption{List of physical quantities.}
		\label{tab}
		\renewcommand\arraystretch{1.5}
		\begin{tabular}{ c ccc c  }
			\hline
			\textbf{Symbol} & \qquad & \textbf{Quantity} &\qquad &\textbf{Physical dimension}  \\ \hline
			$\rho$ && liquid mass density  && $\left[\frac{kg}{m^3}\right]$\\ 
			$\theta$ && volumetric water content && $\left[-\right]$\\ 
			$\bfq$ && liquid mass flux && $\left[\frac{kg}{m^2\, {s}}\right]$\\ 
			$\bfsi$ && stress tensor && $\left[\frac{kg}{m\, s^2}\right]$\\ 
			$\bfu$ && displacement vector && $\left[m\right]$\\ 
			$\mu$ && bulk elasticity modulus && $\left[\frac{kg}{m\, s^2}\right]$\\ 
			$\gamma$ && bulk viscosity && $\left[\frac{kg}{m\, s}\right]$\\ 
			$p$ && liquid pressure && $\left[\frac{kg}{m\, {s}^2}\right]$\\ 
			$p_0$ && standard pressure && $\left[\frac{kg}{m\, {s}^2}\right]$
			\\ 
			$g_0$ && gravity constant && $\left[\frac{m}{{s}^2}\right]$\\ 
			$\kappa$ && permeability && $\left[{s}\right]$
			\\ 
			$b^*$ && boundary permeability && $\left[\frac{s}{m}\right]$\\
			\hline
		\end{tabular}
	\end{center}
\end{table}

The fluid diffusion in a domain $\Omega \subset \real^3$ filled with a deformable unsaturated porous medium is driven by the liquid mass conservation principle
\begin{equation}\label{e1}
	\rho \theta_t = -\dive \bfq
\end{equation}
with constant mass density $\rho$, and by the mechanical equilibrium equation for the solid
\begin{equation}\label{s1}
	\dive \bfsi = 0 \ \ \mbox{ in }\ \Omega, \quad \bfsi\cdot \bfn(x) = 0 \ \ \mbox{ on }\ \partial\Omega,
\end{equation}
where $\bfn(x)$ denotes the unit outward normal vector to $\Omega$ at a point $x \in \partial\Omega$.
We write \eqref{s1} in variational form
\begin{equation}\label{s2}
	\io \bfsi : \nabla_s \bfxi \dd x = 0, \qquad \forall \bfxi \in W^{1,2}(\Omega; \real^3),
\end{equation}
where $\nabla_s$ denotes the symmetric gradient.

We assume that the deformations are small, so that $\dive \bfu$ is the relative local volume change, and that shear stresses can be neglected, that is,
\begin{equation}\label{s3}
	\bfsi = (\mu \dive \bfu + \gamma \dive \bfu_t - p)\bfde,
\end{equation}
where $\mu >0$ is a constant bulk elasticity modulus, $\gamma >0$ is a bulk viscosity modulus, and $\bfde = (\delta_{ij})$, $i,j = 1,2,3$ is the Kronecker symbol. Then \eqref{s2} can be written as
\begin{equation}\label{s3a}
	\io (\mu \dive \bfu + \gamma \dive \bfu_t - p) \dive \bfxi \dd x = 0, \qquad \forall \bfxi \in W^{1,2}(\Omega; \real^3). 
\end{equation}
Let $h \in L^2(\Omega)$ be arbitrary, and let $W$ be the solution to the problem
\begin{equation}\label{s4}
	-\Delta W = h, \qquad W = 0 \ \mbox{ on } \ \partial \Omega.
\end{equation}
Putting $\bfxi = \nabla W$ in \eqref{s3a}, we get the mechanical equilibrium equation in the form
\begin{equation}\label{s5}
	\mu \dive \bfu + \gamma \dive \bfu_t - p = 0
\end{equation}
a.e.  in $\Omega$.

Assume now that the relative liquid mass flux $\bfq - \rho \bfu_t$ is proportional to the liquid pressure gradient $\nabla p$, that is,
\begin{equation}\label{e2}
	\bfq - \rho \bfu_t = -\kappa\nabla p,
\end{equation}
with proportionality factor $\kappa = \kappa(x,\theta)>0$ depending on $x \in \Omega$ and $\theta$. The liquid pressure $p$ can be decomposed into two components
\begin{equation}\label{pres1}
	p = p_c + p_h,
\end{equation}
where $p_c$ is the capillary pressure and $p_h$ is the hydrostatic pressure. For the hydrostatic pressure we assume the classical relation
\begin{equation}\label{pres2}
	p_h = \rho g_0 \bfnu\cdot (x-x_0),
\end{equation}
where $\bfnu$ is the unit vector in the gravity direction and $x_0$ is a referential point.

We introduce the dimensionless normalized capillary pressure
\begin{equation}\label{pres3}
	u = \frac{p_c}{p_0}
\end{equation}
and follow the modeling hypotheses of \cite{alb,bsch} which consists in representing the $p_c \mapsto \theta$ hysteresis relation by the formula
\begin{equation}\label{e3}
	\theta = G[u],
\end{equation}
where $G$ is a Preisach operator defined below in Section~\ref{stat}. 

On the boundary $\partial\Omega$ of $\Omega$ we assume that the unit outward normal $\bfn(x)$ is defined almost everywhere and that the normal component of the relative flux $\bfq - \rho \bfu_t$ in \eqref{e2} is proportional to the difference of pressures $p$ inside and $p^*$ outside the body, that is,
\begin{equation}\label{bou}
	-\kappa(x,\theta)\nabla p\cdot \bfn(x) = b^*(x)(p-p^*) \ \ \mbox{ on }\ \partial\Omega.
\end{equation}
The weak formulation of the mass balance Eq \eqref{e1} then reads
\begin{equation}\label{mass}
	\io \Big(\rho (\theta_t + \dive \bfu_t) \vrt + \kappa(x,\theta)\nabla p\cdot\nabla \vrt\Big)\dd x + \ipo b^*(x)(p-p^*)\vrt\dd s(x) = 0 
\end{equation}
for every test function $\vrt \in W^{1,2}(\Omega) \cap L^\infty(\Omega)$.

Assume for the moment that there is no mass exchange with the exterior corresponding to the choice $b^*(x) = 0$ in \eqref{bou}, that is, 
\begin{equation}\label{e5}
	-\kappa(x,\theta)\nabla p\cdot \bfn(x) = 0
\end{equation}
on $\partial\Omega$. Putting $\vrt=1$ in \eqref{mass}, we get
\begin{equation}\label{e6}
	\frac{\dd}{\dd t} \io\rho (\theta + \dive \bfu)(x,t)\dd x =  0.
\end{equation}
The interpretation of \eqref{e6} is mass conservation. The term $\io \dive \bfu(x,t)\dd x$ describes the evolution of the volume represented by $\Omega$, and $\rho\io (\theta + \dive \bfu)(x,t)\dd x$ is the total water mass in $\Omega$ at time $t$. Naturally, if the volume increases and mass is conserved, then the saturation decreases and vice versa.

Consider now the energy balance. Under the boundary condition \eqref{e5}, no power is supplied to the system, and the total energy of the system should therefore decrease because all phenomena like viscosity, diffusion, and hysteresis dissipate energy. We put $\vrt = p/\rho$ in \eqref{mass} and obtain using \eqref{e5} that
\begin{equation}\label{ee7}
	\io \big(\theta_t + \dive \bfu_t\big) p\, \dd x + \io \frac{\kappa(x,\theta)}{\rho}|\nabla p|^2 \dd x = 0.
\end{equation}
For a Preisach operator $G$ in \eqref{e3} there exists a Preisach potential operator $V$ and a dissipation operator $D$ such that
\begin{equation}\label{e8g}
	G[u]_t u = V[u]_t +|D[u]_t| \ \ale
\end{equation}
Using \eqref{s5} and \eqref{pres1}--\eqref{pres3}, we can rewrite \eqref{ee7} in the form
\begin{align}\nonumber
	&\frac{\dd}{\dd t} \io\left(p_0 V[u] + \rho g_0 \theta\, \bfnu\cdot (x-x_0) + \frac{\mu}{2}|\dive\bfu|^2\right) \dd x\\[2mm]\label{e8}
	&\qquad + \io \left(p_0|D[u]_t| + \frac{\kappa(x,\theta)}{\rho}|\nabla p|^2 + \gamma |\dive\bfu_t|^2\right)\dd x = 0.
\end{align}
The energetic interpretation of \eqref{e8} is the following: The term
\begin{equation}\label{e9}
	\io \left(p_0|D[u]_t| + \frac{\kappa(x,\theta)}{\rho}|\nabla p|^2 + \gamma |\dive\bfu_t|^2\right)\dd x
\end{equation}
is the dissipation rate which is positive in agreement with the principles of thermodynamics, while
\begin{equation}\label{e10}
	\io\left(p_0 V[u] + \rho g_0 \theta\, \bfnu\cdot (x-x_0) + \frac{\mu}{2}|\dive\bfu|^2\right) \dd x
\end{equation}
is the potential energy of the system which is therefore decreasing as expected.

Putting $v = \dive \bfu - p_h/\mu$, from \eqref{s5} and \eqref{pres1}--\eqref{mass} we get the following system, consisting of a PDE coupled with an ODE, for the unknowns $u$ and $v$:
\begin{align}\label{pde0}
	&	\io \left((G[u] + v)_t \vrt + \frac{\kappa(x,G[u]) p_0}{\rho}\left(\nabla u + \frac{\rho g_0}{p_0}\bfnu\right)\cdot\nabla \vrt\right)\dd x + \ipo \frac{p_0 b^*(x)}{\rho}(u-u^*)\vrt\dd s(x)  = 0,\\ \label{pde1}
	& \qquad \qquad  \qquad \qquad  \qquad \qquad   \gamma v_t + \mu v - p_0 u = 0,
\end{align}
for every $\vrt \in W^{1,2}(\Omega) \cap L^\infty(\Omega)$, where we set $u^* = (p^*- p_h)/p_0$.

The existence proof for \eqref{pde0}-\eqref{pde1} is independent of the exact values of the physical constants. We therefore normalize all constants to $1$ for simplicity and consider the system
\begin{align}\label{pde}
	\io \Big((G[u] + v)_t \vrt + \kappa(x,G[u])&\left(\nabla u + \bfnu\right)\cdot\nabla \vrt\Big)\dd x + \ipo  b^*(x)(u-u^*)\vrt\dd s(x) = 0,\\ \label{pd0}
	&	v_t + v = u,
\end{align}
for every $\vrt \in W^{1,2}(\Omega)\cap L^\infty(\Omega)$, coupled with the initial conditions
\begin{equation}\label{ie3}
	u(x, 0) = u_0(x), \quad v(x,0) = v_0(x)\ \ale \mbox{ in } \ \Omega.
\end{equation}

%%%%%%%%%%%%%%%%%%%%%%%%%%%%%%%%%%%%%%%%%%%%%

\section{Statement of the problem}\label{stat}

We study here Problem \eqref{pde}--\eqref{ie3} in a bounded Lipschitzian domain $\Omega \subset \real^N$ and time interval $(0,T)$, where  $N \in \nat$ can be arbitrary. Recall first the definition of the Preisach operator introduced in \cite{prei} in the variational setting of \cite{book}.

\begin{definition}\label{dpr}
	Let $\lambda \in L^\infty(\Omega \times (0,\infty))$ be a given function with the following properties: 
	\begin{align}
		\label{ge6b}
		&\exists \Lambda>0 : \ \lambda(x,r) = 0\ \for r\ge \Lambda, \forall x \in \Omega,\\[2mm] \label{ge6}
		&\begin{aligned}
			\exists \bar{\lambda}>0 : \ |\lambda(x_1,r_1) - \lambda(x_2,r_2)| &\le \Big(\bar{\lambda}\,|x_1 - x_2| + |r_1 - r_2|\Big)\ \forall r_1, r_2 \in (0,\infty), \forall x_1,x_2 \in \Omega.
		\end{aligned}
	\end{align}
	For a given $r>0$, we call the {\em play operator with threshold $r$ and initial memory $\lambda$} the mapping which with a given function $u \in L^1(\Omega; W^{1,1}(0,T))$ associates the solution $\xi^r\in L^1(\Omega; W^{1,1}(0,T))$ of the variational inequality
	\begin{equation}\label{ge4a}
		|u(x,t) - \xi^r(x,t)| \le r, \quad \xi^r_t(x,t)(u(x,t) - \xi^r(x,t) - z) \ge 0 \ \ale \ \forall z \in [-r,r],
	\end{equation}
	with initial condition
	\begin{equation}\label{ge5}
		\xi^r(x,0) = \lambda(x,r) \ \ale,
	\end{equation}
	and we denote
	\begin{equation}\label{ge4}
		\xi^r(x,t) = \play_r[\lambda,u](x,t).
	\end{equation}
	Given a measurable function $\phi :\Omega\times(0,\infty)\times \real \to [0,\infty)$ and a constant $\bar G \in \real$, the Preisach operator $G$ is defined as a mapping $G: L^2(\Omega; W^{1,1}(0,T))\to L^2(\Omega; W^{1,1}(0,T))$ by the formula
	\begin{equation}\label{ge3}
		G[u](x,t) = \bar G + \ \int_0^\infty\int_0^{\xi^r(x,t)} \phi(x,r,v)\dd v\dd r.
	\end{equation}
	The Preisach operator is said to be {\em regular\/} if the density function $\phi$ of $G$ in \eqref{ge3} belongs to $L^\infty(\Omega\times (0,\infty )\times \real)$, and there exist constants $\phi_1,\bar{\phi} > 0$ and a decreasing function $\phi_0: \real\to \real$ such that for all $U>0$, all $x,x_1,x_2 \in \Omega$, and a.e.\ $(r,v)\in (0,U) \times (-U,U)$ we have
	\begin{align}
		&0 < \phi_0(U) < \phi(x,r,v) < \phi_1, \label{ge3a} \\[2mm]
		&|\phi(x_1,r,v) - \phi(x_2,r,v)| \le \bar{\phi}\,|x_1 - x_2|. \label{ge3b}
	\end{align}
\end{definition}

In applications, the natural physical condition $\theta=G[u] \in [0,1]$ is satisfied for each input function $u$ if and only if $\bar G \in (0,1)$ and
\begin{equation}\label{irho}
	\int_0^\infty \int_0^\infty \phi(x,r,v)\dd v\dd r \le 1-\bar G, \quad \int_0^\infty \int_0^\infty \phi(x,r,-v)\dd v\dd r \le \bar G,
\end{equation}
for a.e.\ $x\in \Omega$. However, for the existence result in Theorem~\ref{t1} only \eqref{ge3a} and \eqref{ge3b} are substantial.

Let us mention the following classical result (see \cite[Proposition~II.3.11]{book}).

\begin{proposition}\label{pc1}
	Let $G$ be a regular Preisach operator in the sense of Definition~\ref{dpr}. Then it can be extended to a Lipschitz continuous mapping $G: L^p(\Omega; C[0,T]) \to L^p(\Omega; C[0,T])$ for every $p \in [1,\infty)$.
\end{proposition}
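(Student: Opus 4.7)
The plan is to reduce the proposition to two classical facts about the scalar play operator and then lift the resulting pointwise (in $x$) estimate to the $L^p$ setting. First, for each fixed $x\in\Omega$ and threshold $r>0$, the map $u(x,\cdot)\mapsto \play_r[\lambda,u](x,\cdot)$ is Lipschitz of norm $1$ from $C[0,T]$ into $C[0,T]$, which simultaneously provides the continuous extension of $\play_r$ from $W^{1,1}(0,T)$ inputs to $C[0,T]$ inputs. Second, since $\lambda(x,r)=0$ for $r\ge \Lambda$ by \eqref{ge6b} and the variational inequality \eqref{ge4a} forces $|u-\xi^r|\le r$ with $\xi^r(x,0)=\lambda(x,r)$, the play $\xi^r(x,\cdot)$ vanishes identically whenever $r\ge R(x):=\max\{\Lambda,\|u_1(x,\cdot)\|_{C[0,T]},\|u_2(x,\cdot)\|_{C[0,T]}\}$, so the outer $r$-integral in \eqref{ge3} is effectively truncated at $R(x)$.

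Combining these two ingredients, I would write the pointwise difference as
\[
G[u_1](x,t)-G[u_2](x,t)=\int_0^{R(x)}\int_{\xi_2^r(x,t)}^{\xi_1^r(x,t)}\phi(x,r,v)\dd v\dd r,
\]
use the global bound $\phi\le \phi_1$ from \eqref{ge3a}, and apply the $C[0,T]$ Lipschitz estimate for the play to arrive at
\[
\|G[u_1](x,\cdot)-G[u_2](x,\cdot)\|_{C[0,T]}\le \phi_1\,R(x)\,\|u_1(x,\cdot)-u_2(x,\cdot)\|_{C[0,T]}
\]
for a.e.\ $x\in\Omega$. Raising to the $p$-th power, integrating in $x$ over $\Omega$, and applying H\"older's inequality then yields the Lipschitz estimate for $G:L^p(\Omega;C[0,T])\to L^p(\Omega;C[0,T])$ with constant depending on $\|u_1\|_{L^p(\Omega;C[0,T])}$ and $\|u_2\|_{L^p(\Omega;C[0,T])}$, i.e.\ Lipschitz continuity on bounded subsets, which is enough for the extension claim.

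The main obstacle is the first ingredient, the $C[0,T]$ Lipschitz continuity of the play operator itself. This is the nontrivial classical result behind Proposition~\ref{pc1}, proved in \cite{book} via a Kurzweil/BV interpretation of \eqref{ge4a}, testing the difference of the two variational inequalities with the admissible comparison $z=\xi_1^r-\xi_2^r\in[-r,r]$, and an integration-by-parts/energy identity. The remaining verifications, namely joint measurability in $x$ of $\xi^r$ and of $G[u]$ and consistency of the extension with the original definition on $L^2(\Omega;W^{1,1}(0,T))$ inputs, follow in a standard way from the Lipschitz and joint-measurability hypotheses \eqref{ge6}, \eqref{ge3a}, \eqref{ge3b} on $\lambda$ and $\phi$ together with a density argument.
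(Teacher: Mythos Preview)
The paper does not actually prove this proposition; it merely cites \cite[Proposition~II.3.11]{book}. Your outline --- the $C[0,T]$-Lipschitz continuity (with constant $1$) of each scalar play $\play_r$, the truncation of the $r$-integral at $R(x)$, and the resulting pointwise bound --- is the standard route and is correct up to and including your displayed inequality.

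The gap is in the final lifting step. H\"older's inequality does \emph{not} give a Lipschitz constant depending only on $\|u_i\|_{L^p(\Omega;C[0,T])}$: since both $R(\cdot)$ and $\|u_1(\cdot)-u_2(\cdot)\|_{C[0,T]}$ are controlled only in $L^p(\Omega)$, their product is controlled merely in $L^{p/2}(\Omega)$, and no choice of conjugate exponents recovers an $L^p$ bound. In fact, under the hypotheses of Definition~\ref{dpr} alone (only $\phi\in L^\infty$, no integrability in $r$) the Preisach operator need not be Lipschitz on $L^p$-bounded sets. For a concrete obstruction, take $\Omega=(0,1)$, $p=1$, $\phi\equiv 1$, $\lambda\equiv 0$, $\bar G=0$, and compare $u_1^{(n)}(x,t)=nt$ for $x<1/n$ (and $0$ otherwise) with $u_2^{(n)}\equiv 0$: both lie in the unit ball of $L^1(\Omega;C[0,1])$, yet $G[u_1^{(n)}](x,1)=n^2/2$ for $x<1/n$, so $\|G[u_1^{(n)}]-G[u_2^{(n)}]\|_{L^1(\Omega;C[0,1])}\ge n/2$. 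What your pointwise estimate \emph{does} deliver is Lipschitz continuity on $L^\infty(\Omega\times(0,T))$-bounded subsets of $L^p(\Omega;C[0,T])$, since then $R(x)$ is uniformly bounded and the $L^p$ estimate follows directly without H\"older. This is precisely the form in which the proposition is invoked in the proof of Theorem~\ref{t1}, where the approximants satisfy the uniform bound \eqref{Lam}.
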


The Preisach operator is rate-independent. Hence, for input functions $u(x,t)$ which are monotone in a time interval $t\in (a,b)$, a regular Preisach operator $G$ can be represented by a superposition operator $G[u](x,t) = B(x, u(x,t))$ with an increasing function $u \mapsto B(x, u)$ called a {\em Preisach branch\/}. Indeed, the branches may be different at different points $x$ and different intervals $(a,b)$. The branches corresponding to increasing inputs are said to be {\em ascending\/} (the so-called wetting curves in the context of porous media), the branches corresponding to decreasing inputs are said to be {\em descending\/} (drying curves).

\begin{definition}\label{dpc}
	Let $U>0$ be given. A Preisach operator is said to be {\em uniformly counterclockwise convex on $[-U,U]$\/} if for all inputs $u$ such that $|u(x,t)|\le U$ a.e., all ascending branches are uniformly convex and all descending branches are uniformly concave in the sense that there exists $\beta >0$ such that for every $u \in (-U,U)$ we have the implications
	$$
	u_t >0 \implies \frac{\partial^2 B}{\partial u^2} \ge \beta, \qquad u_t<0 \implies \frac{\partial^2 B}{\partial u^2} \le -\beta
	$$
	in the sense of distributions, see \cite{colli}.
	
	A regular Preisach operator $G$ is called {\em convexifiable\/} if for every $U>0$ there exist a uniformly counterclockwise convex Preisach operator $P$ on $[-U,U]$, positive constants $g_*(U),g^*(U),\bar{g}(U)$, and a twice continuously differentiable mapping $g:[-U,U] \to [-U,U]$ such that
	\begin{equation}\label{hg}
		g(0)=0, \quad 0<g_*(U) \le g'(u) \le g^*(U), \quad |g''(u)| \le \bar g(U)\ \ \forall u\in [-U,U],
	\end{equation}
	and $G = P\circ g$.
\end{definition}

A typical example of a uniformly counterclockwise convex operator is the so-called {\em Prandtl-Ishlinskii operator\/} characterized by positive density functions $\phi(x,r)$ independent of $v$, see \cite[Section~4.2]{book}. Operators of the form $P\circ g$ with a Prandtl-Ishlinskii operator $P$ and an increasing function $g$ are often used in control engineering because of their explicit inversion formulas, see \cite{al,viso,kk}. They are called the {\em generalized Prandtl-Ishlinskii operators\/} (GPI) and represent an important subclass of Preisach operators. Note also that for every Preisach operator $P$ and every Lipschitz continuous increasing function $g$, the superposition operator $G = P\circ g$ is also a Preisach operator, and there exists an explicit formula for its density, see \cite[Proposition~2.3]{error}. Another class of convexifiable Preisach operators is shown in \cite[Proposition~1.3]{colli}.

The technical hypotheses on the data in \eqref{pde}-\eqref{pd0} can be stated as follows.

\begin{hypothesis}\label{hy2}
	The boundary permeability $b^*$ belongs to $L^\infty(\partial\Omega)$, and is such that $b^*(x)\ge 0$ 
	a.e.\ and $\ipo b^*(x)\dd s(x) > 0$. The boundary source $u^*$ belongs to $L^\infty(\partial \Omega\times(0,T))$; in addition, $u^*_t \in L^2(\partial \Omega\times(0,T))$.
	The permeability $\kappa:\Omega \times \real \to \real$ is a bounded Lipschitz continuous function, more precisely, there exist constants $\bar\kappa>0$, $\kappa_1 > \kappa_0 >0$ such that for all $\theta, \theta_1, \theta_2\in \real$ and all $x, x_1, x_2 \in \Omega$ we have
	$$
	\kappa_0 \le \kappa(x,\theta) \le \kappa_1, \quad |\kappa(x_1,\theta_1) - \kappa(x_2,\theta_2)| \le \bar\kappa\big(|x_1 - x_2| + |\theta_1 - \theta_2|\big).
	$$
\end{hypothesis}

Note that we can rewrite \eqref{pde}-\eqref{pd0} for $\vrt \in W^{1,2}(\Omega)\cap L^\infty(\Omega)$ as
\begin{align*}
	\io \Big((G[u]_t + u-v)\vrt &+ \kappa(x,G[u])\left(\nabla u + \bfnu\right)\cdot\nabla \vrt\Big)\dd x + \ipo  b^*(x)(u-u^*)\vrt\dd s(x) = 0, \\
	&  v_t + v = u.
\end{align*}
Therefore, taking into account \eqref{ie3}, even a local solution to Problem \eqref{pde}--\eqref{ie3} may fail to exist if for example $\lambda(x,r) \equiv 0$ and $\!\dive\!\big(\kappa(x,G[u](x,0))\left(\nabla u_0(x) + \bfnu\right)\big)+v_0(x)-u_0(x) \ne 0$, and we need an initial memory compatibility condition which we state here following \cite{colli}. A more detailed discussion on this issue can be found in the introduction to \cite{colli}.

\begin{hypothesis}\label{hy1}
	Let the initial memory $\lambda$ and the Preisach density function $\phi$ be as in Definition~\ref{dpr}. The initial pressure $u_0$ belongs to $W^{2,\infty}(\Omega)$, the initial volume strain $v_0$ belongs to $L^\infty(\Omega)$, and there exist a constant $L>0$ and a function $r_0 \in L^\infty(\Omega)$ such that, for $\Lambda>0$ as in \eqref{ge6b}, $\supess_{x\in \Omega}|u_0(x)| \le \Lambda$ and for a.e.\ $x \in \Omega$ the following initial compatibility conditions hold:
	\begin{align}\label{c0}
		\lambda(x,0) = u_0(x) &\ \ale,\\ \label{c0a}
		\theta_0(x) = G[u](x,0) = \bar G + \ \int_0^\infty\int_0^{\lambda(x,r)} \phi(x,r,v)\dd v\dd r &\ \ale,\\ \label{c1}
		\frac1L \sqrt{\big|\!\dive\!\big(\kappa(x,\theta_0(x))\left(\nabla u_0(x) + \bfnu\right)\big)+v_0(x)-u_0(x)\big|} \le r_0(x) \le \Lambda &\ \ale, \\ \label{c2}
		-\frac{\partial}{\partial r} \lambda(x,r) \in \sign\Big(\!\dive\!\big(\kappa(x,\theta_0(x))\!\left(\nabla u_0(x) + \bfnu\right)\big)+v_0(x)-u_0(x)\Big) &\ \ale\ \for r\in (0,r_0(x)), \\ \label{c2a}
		-\kappa(x,\theta_0(x))\big(\nabla u_0(x) + \bfnu\big) \cdot \bfn(x) = b^*(x) (u_0(x) - u^*(x,0))  &\ \ale\ \mbox{\em on }\, \partial\Omega.
	\end{align}
\end{hypothesis}

Unlike \cite{colli}, here we do not need to assume $\!\dive\!\big(\kappa(x,\theta_0(x))\left(\nabla u_0(x) + \bfnu\right)\big) \in L^\infty(\Omega)$ since it follows from the fact that $u_0 \in W^{2,\infty}(\Omega)$ together with assumptions \eqref{ge6}, \eqref{ge3b}, and Hypothesis~\ref{hy2} on $\kappa$. Instead, in addition to the hypotheses of \cite{colli}, we have to assume a polynomial decay of the Preisach density function at infinity that will allow us to perform a hysteresis variant of the Moser iterations in Section~\ref{unif}.

\begin{hypothesis}\label{hrho}
	The Preisach density $\phi :\Omega\times(0,\infty)\times \real \to [0,\infty)$ in \eqref{ge3} is such that
	\begin{equation}\label{c2b}
		\exists m>0\ \exists \phi_0>0: \phi(x,r,v) \ge \phi_0\max\{1,r+|v|\}^{-m} \ \ale\ \for \ (r,v) \in (0,\infty)\times \real.
	\end{equation}
\end{hypothesis}

Condition \eqref{c2b} is indeed compatible with \eqref{irho} if $m>2$, as
$$
\int_0^\infty \int_0^\infty \max\{1,r+v\}^{-m}\dd v\dd r = \int_0^\infty \int_r^\infty \max\{1,z\}^{-m}\dd z\dd r = \frac12 +\frac{1}{m-2},
$$
so that \eqref{irho} is satisfied if for example $\bar G =1/2$ and $\phi_0 \le 1- 2/m$.

Our main existence result reads as follows.

\begin{theorem}\label{t1}
	Let Hypotheses~\ref{hy2}--\ref{hrho} hold, and let $G$ be a convexifiable Preisach operator in the sense of Definition~\ref{dpc}. Then there exists a solution $(u,v)$ to Problem~\eqref{pde}--\eqref{ie3} such that $u,v \in L^\infty(\Omega\times (0,T))$, $\nabla u \in L^2(\Omega\times (0,T);\real^N)$, $u_t$ and $\theta_t = G[u]_t$ belong to the Orlicz space $L^{\Phi_{log}}(\Omega\times (0,T))$ generated by the function $\Phi_{log}(v) = v\log(1+v)$, and $v_t \in L^\infty(\Omega\times (0,T))$.
\end{theorem}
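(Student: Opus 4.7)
I follow the four-stage roadmap announced in the Introduction. First I discretize time with step $\tau = T/n$, solving at each level $k\tau$ the implicit elliptic problem associated with \eqref{pde} coupled with the discrete analogue of the ODE \eqref{pd0}, which permits algebraic elimination of the discrete volumetric strain in terms of $u^{(k)}$ and $v^{(k-1)}$. Existence of each time slice follows from standard monotone operator theory: the time-discrete hysteresis increment is Lipschitz and monotone in $u^{(k)}$ by Proposition~\ref{pc1}, the diffusion term is coercive via the lower bound $\kappa \ge \kappa_0$ from Hypothesis~\ref{hy2}, and the boundary term is monotone because $b^*\ge 0$. Testing with $u^{(n)}$, multiplying by $\tau$, and summing over $k$ yields the discrete counterpart of \eqref{e8} and gives, independently of $\tau$, the bound $\nabla u^{(n)} \in L^2(\Omega\times(0,T))$ together with $H^1$-in-time control of $v^{(n)}$ and integrability of the Preisach dissipation.

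The most delicate step, and the true novelty compared with \cite{colli,perme}, is the uniform $L^\infty$ bound, since neither a maximum principle nor a simple comparison argument is available in the presence of the gravity drift $\bfnu$ and of the viscoelastic coupling. I would test the discrete equation with $|u^{(n)}|^{p-1}\sign(u^{(n)})$ along a geometric sequence of exponents $p = p_k \to \infty$. By \eqref{c2b}, on the level set $\{|u|\le R\}$ the Preisach density $\phi$ is bounded below by $\phi_0\max\{1,R\}^{-m}$, and integration in the memory parameters produces a coercive hysteresis contribution of order $\max\{1,R\}^{-m}\int|u|^{p+1}$. Balanced against the diffusion term $\int\kappa|\nabla u|^2|u|^{p-2}$ and with the lower-order drift, boundary, and viscoelastic terms absorbed by Young's inequality, this yields a Moser recursion on $\|u^{(n)}\|_{L^{p_k}(\Omega\times(0,T))}$ whose summability is guaranteed precisely by the polynomial decay exponent $m$ appearing in Hypothesis~\ref{hrho}. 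Iterating produces $\|u^{(n)}\|_{L^\infty}\le C$ uniformly in $\tau$, and the linear ODE \eqref{pd0} immediately upgrades this to $v^{(n)},v_t^{(n)} \in L^\infty(\Omega\times(0,T))$.

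With the a priori $L^\infty$ bound in hand, I would replay the convexification machinery of \cite{perme}. Writing $G = P\circ g$ with $P$ uniformly counterclockwise convex on the range $[-\|u\|_\infty,\|u\|_\infty]$, testing a discrete time difference of \eqref{pde} with a suitable function of the time difference of $u^{(n)}$ produces the convexity estimate for the time derivative of pressure, and hence a uniform bound on $u^{(n)}_t$ in the Orlicz space $L^{\Phi_{\log}}$. This is precisely the regularity that feeds into the anisotropic compact embedding theorem of \cite{perme} and delivers a subsequence $u^{(n_j)} \to u$ converging strongly in $L^2(\Omega\times(0,T))$. The Lipschitz continuity of the Preisach operator $G$ (Proposition~\ref{pc1}) and of $\kappa(x,\cdot)$ (Hypothesis~\ref{hy2}) then allows me to pass to the limit in the nonlinear hysteresis and permeability terms; the linear ODE passes to the limit trivially, completing the proof of the theorem and giving exactly the regularity class $(u,v)$ announced in the statement.
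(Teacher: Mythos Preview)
Your overall four-stage strategy matches the paper's, but two steps as described would fail.

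First, the Moser iteration. Your claim that on $\{|u|\le R\}$ the hysteresis contributes a coercive term of order $\max\{1,R\}^{-m}\int|u|^{p+1}$ is not correct: the accumulated hysteresis energy after testing with the truncated power is $\int_0^\infty\int_0^{\xi^r_i} U_{k,R}(v)\,\phi(x,r,v)\dd v\dd r$, and the $r$-integral runs over the whole half-line, so $r+|v|$ is not confined to $[0,R]$ and you cannot replace $\phi$ by the constant $\phi_0 R^{-m}$. Even if you could, a prefactor that vanishes as $R\to\infty$ would kill the recursion. The paper instead keeps the pointwise bound $\phi(x,r,v)\ge\phi_0\max\{1,r+|v|\}^{-m}$ inside the double integral and computes explicitly (see \eqref{intpsi3}--\eqref{AB}) that the result dominates $\min\{R,|u_i|\}^{2k+3-m}$. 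Thus the hysteresis energy controls a \emph{lower} Lebesgue norm $|w_i|_{2k+3-m}$, and the Moser recursion is $k_j = 2k_{j-1}+3-m$ (see \eqref{kj}) rather than a plain doubling; the shift by $m-3$ is exactly where Hypothesis~\ref{hrho} enters, and convergence of the iteration then requires the infinite-product argument \eqref{ome}--\eqref{ome3}.

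Second, strong $L^2(\Omega\times(0,T))$ convergence of $u^{(n)}$ is not enough to pass to the limit in $G[u^{(n)}]$: Preisach operators are nonlocal in time, and Proposition~\ref{pc1} gives Lipschitz continuity only on $L^p(\Omega;C[0,T])$. The compact embedding from \cite{perme} used in the paper delivers convergence in $L^1(\Omega;C[0,T])$, which is what is actually needed. A minor third point: monotone operator theory alone does not give existence of the discrete time slices, since $-\dive\big(\kappa(x,G_j(x,u))\nabla u\big)$ is not monotone in $u$; the paper uses a Galerkin approximation and a Brouwer degree homotopy argument instead.
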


Basic properties of Orlicz spaces are summarized in \cite[Section~5]{perme}. For a more comprehensive discussion, we refer the interested reader to the monographs \cite{ada,rare}.

%%%%%%%%%%%%%%%%%%%%%%%%%%%%%%%%%%%%%%%%%%%%%

\section{Time discretization}\label{disc}

We proceed as in \cite{colli}, choose a discretization parameter $n \in \nat$, define the time step $\tau = T/n$, and replace system \eqref{pde}-\eqref{pd0} with the following time-discrete counterpart for the unknowns $\{u_i, v_i: i = 1, \dots, n\}$:
\begin{align}\nonumber
	&\hspace{-9cm}\io \left(\frac1\tau\Big((G[u]_i - G[u]_{i-1}) + (v_i - v_{i-1})\Big)\vrt + \kappa(x,G[u]_{i})\big(\nabla u_i + \bfnu\big)\cdot\nabla\vrt\right)\dd x\\
	\label{dis1}\hspace{17mm} + \ipo b^*(x)(u_i - u^*_i)\vrt \dd s(x) &= 0,\\[2mm] \label{dv1}
	\frac1\tau (v_i - v_{i-1}) + v_i &= u_i,
\end{align}
for every test function $\vrt \in W^{1,2}(\Omega)$, where $u^*_i(x) = u^*(x,i\tau)$ for $i\in \{1,\dots,n\}$, $u_0$ and $v_0$ are as in \eqref{ie3}. Here, the time-discrete Preisach operator $G[u]_i$ is defined for an input sequence $\{u_i : i\in\nat\cup\{0\}\}$ by a formula of the form \eqref{ge3}, namely,
\begin{equation}\label{de3}
	G[u]_i(x) = \bar G + \  \int_0^{\infty}\int_0^{\xi^r_i(x)} \phi(x,r,v)\dd v\dd r,
\end{equation}
where $\xi^r_i$ denotes the output of the time-discrete play operator
\begin{equation}\label{de4}
	\xi^r_i(x) = \play_r[\lambda,u]_i(x)
\end{equation}
defined as the solution operator of the variational inequality
\begin{equation}\label{de4a}
	|u_i(x) - \xi^r_i(x)| \le r, \quad (\xi^r_i(x) - \xi^r_{i-1}(x))(u_i(x) - \xi^r_i(x) - z) \ge 0, \quad \forall i\in \nat, \ \ \forall z \in [-r,r],
\end{equation}
with a given initial condition
\begin{equation}\label{de5}
	\xi^r_0(x) = \lambda(x,r) \ \ale
\end{equation}
similarly as in \eqref{ge4a}-\eqref{ge5}. Note that the discrete variational inequality \eqref{de4a} can be interpreted as weak formulation of \eqref{ge4a} for piecewise constant inputs in terms of the Kurzweil integral, and details can be found in \cite[Section 2]{ele}.

\begin{proposition}
	System \eqref{dis1}-\eqref{dv1} with initial conditions $u_0$ and $v_0$ as in \eqref{ie3} admits at least one solution $\{u_i, v_i\} \subset W^{1,2}(\Omega)$ for each $i\in \{1,\dots,n\}$.
\end{proposition}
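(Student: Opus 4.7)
The plan is to eliminate $v_i$ algebraically first. Equation \eqref{dv1} gives $v_i=(v_{i-1}+\tau u_i)/(1+\tau)$, so the system collapses to a single elliptic equation for $u_i\in W^{1,2}(\Omega)$:
\begin{equation*}
\io\Big(\tfrac{1}{\tau}G[u]_i\vrt + \tfrac{1}{1+\tau}u_i\vrt + \kappa(x,G[u]_i)(\nabla u_i+\bfnu)\cdot\nabla\vrt\Big)\dd x + \ipo b^*(x)u_i\vrt\dd s(x) = \scal{F_i,\vrt},
\end{equation*}
for every $\vrt\in W^{1,2}(\Omega)$, where $\scal{F_i,\cdot}\in (W^{1,2}(\Omega))^*$ is assembled from the data at step $i{-}1$ (namely $G[u]_{i-1}/\tau$, $v_{i-1}/(1+\tau)$, and the boundary term $b^*u_i^*$).

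The nonlinearity in $u_i$ plays two different roles: it gives a monotone reaction through $G[u]_i/\tau$, but it also enters the coefficient $\kappa(x,G[u]_i)$ of the principal part non-monotonically. I plan to decouple them by a Schauder fixed point argument. Given $w\in L^2(\Omega)$, freeze the coefficient to $\kappa(x,G[w]_i)$ and look for $u$ solving
\begin{equation*}
\io\Big(\tfrac{1}{\tau}G[u]_i\vrt + \tfrac{1}{1+\tau}u\vrt + \kappa(x,G[w]_i)\nabla u\cdot\nabla\vrt\Big)\dd x + \ipo b^*(x)u\vrt\dd s(x) = \scal{F_i(w),\vrt}.
\end{equation*}
With the memory $\{\xi^r_{i-1}\}$ fixed, the discrete play update \eqref{de4a} reduces at each $x$ to the one-step projection $\xi^r_i=\max\{u_i-r,\min\{u_i+r,\xi^r_{i-1}\}\}$, which is $1$-Lipschitz and non-decreasing in $u_i$; since $\phi\ge 0$, the same holds for $u\mapsto G[u]_i$. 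Combined with the coercive contributions $\|u\|_{L^2}^2/(1+\tau)$ and $\kappa_0\|\nabla u\|_{L^2}^2$, the left-hand side defines a bounded, hemicontinuous, monotone, coercive operator from $W^{1,2}(\Omega)$ to its dual, and Browder--Minty produces a unique solution $u=T(w)$.

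Testing the auxiliary equation with $\vrt=u$ yields $\|T(w)\|_{W^{1,2}(\Omega)}\le R$ with $R$ depending only on $\tau$, on the data at step $i{-}1$, and on $\kappa_0,\kappa_1$, but \emph{not} on $w$. Hence $T$ takes $L^2(\Omega)$ into a bounded subset of $W^{1,2}(\Omega)$ and is compact by Rellich--Kondrachov. Continuity of $T$ on $L^2(\Omega)$ follows by passing to the limit in the frozen-coefficient equation as $w_n\to w$ in $L^2(\Omega)$: Proposition~\ref{pc1} gives $G[w_n]_i\to G[w]_i$ strongly in $L^2(\Omega)$ and pointwise a.e.\ along a subsequence, hence $\kappa(x,G[w_n]_i)\to\kappa(x,G[w]_i)$ boundedly a.e., and dominated convergence handles the principal part. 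Schauder's fixed point theorem then produces $u_i=T(u_i)$, and $v_i$ is recovered from \eqref{dv1}. The delicate point is precisely this continuity: one cannot fold the two occurrences of $G[u]_i$ into a single monotone operator, because $\kappa(x,G[u]_i)$ sits in the principal part non-monotonically, which is what forces the use of compactness via Schauder rather than a purely pseudomonotone argument.
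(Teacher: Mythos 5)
Your proof is correct, and it takes a genuinely different route than the paper's. The first two steps coincide: you eliminate $v_i$ through \eqref{dv1} exactly as the paper does, and you observe that over a single time step the Preisach update has no memory change, so $G[u]_i=G_j(x,u_i)$ with $G_j(x,\cdot)$ nondecreasing and Carath\'eodory; your explicit formula $\xi^r_i=\max\{u_i-r,\min\{u_i+r,\xi^r_{i-1}\}\}$ makes this monotonicity transparent, whereas the paper merely asserts it. From there the two arguments diverge. You separate the two roles of $G_j$ (monotone reaction versus non-monotone principal coefficient), freeze the latter, obtain a unique solution of the frozen problem by Browder--Minty, and close the loop by Schauder using the compactness of $W^{1,2}(\Omega)\hookrightarrow L^2(\Omega)$. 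The paper instead passes to a finite-dimensional Galerkin system in the eigenbasis $\{e_k\}$, establishes existence of the Galerkin solutions by a homotopy and Brouwer degree (connecting the nonlinear map $T_1$ to a linear odd-degree map $T_0$), derives $W^{1,2}$ bounds uniform in $m$, and passes to the limit $m\to\infty$. Your approach buys a cleaner operator-theoretic statement and correctly isolates the structural obstruction (the non-monotone coefficient in the principal part, forcing compactness rather than pseudomonotonicity); it also avoids the Galerkin limit passage. The paper's approach buys the comfort of finite dimensions, where boundedness questions for $G_j$ trivialize and degree theory applies directly. One small caveat on your route: Browder--Minty as usually stated requires the operator to be bounded from $W^{1,2}(\Omega)$ to its dual, and since $G_j(x,\cdot)$ can have superlinear growth (only bounded under the physical normalization \eqref{irho}, which Theorem~\ref{t1} does not assume), for large $N$ you would need either a truncation of $G_j$ or a Galerkin step inside the frozen problem; for the physically relevant $N\le 6$ this is not an issue. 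Also note that your coercivity estimate implicitly uses the monotonicity $G_j(x,u)u\ge G_j(x,0)u$, which you should write out since the naive bound $|G_j(x,u)||u|$ is not dominated by the quadratic coercive terms.
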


\begin{proof}
	We proceed by induction, and assume that the sequences $\{u_i, v_i\}$ have already been constructed for $i = 1,\dots,j-1$. Note that there is no hysteresis in the passage from $u_{j-1}$ to $u_j$, so that there exists a function $G_j: \Omega\times \real \to \real$, continuous and nondecreasing in the second variable, such that $G[u]_j = G_j(x, u_j)$. Furthermore, by \eqref{dv1} we have
	\begin{equation}\label{vj}
		v_j(x) = \frac{1}{1+\tau} v_{j-1}(x) + \frac{\tau}{1+\tau}u_j(x).
	\end{equation}
	Hence, \eqref{dis1}-\eqref{dv1} can be interpreted as a quasilinear elliptic equation for the unknown $u \coloneqq u_j$ of the form
	\begin{equation}\label{req1}
		\begin{aligned}
			&\io \Bigg(\bigg(\frac1\tau G_j(x,u) + \frac1{1+\tau} u\bigg)\vrt + \kappa(x,G_j(x,u))(\nabla u + \bfnu)\cdot\nabla\vrt\Bigg) \dd x\\
			&\hspace{17mm} + \ipo b^*(x)(u - u^*_j)\vrt \dd s(x) = \io a_j y \dd x
		\end{aligned}
	\end{equation}
	for every $y \in W^{1,2}(\Omega)$ with given functions $a_j \in L^2(\Omega)$.
	
	Let $\{e_k: k\in \nat\} \subset W^{1,2}(\Omega)$ be an orthonormal basis of $L^2(\Omega)$. It is convenient to choose eigenfunctions of the following problem, which is compatible with boundary conditions \eqref{bou}, namely
	$$
	\kappa_0 \io \nabla e_k\cdot \nabla y\dd x + \ipo b^*(x) e_k y \dd s(x) = \mu_k \io e_k y\dd x
	$$
	with eigenvalues $0 < \mu_1 <\mu_2 \le \dots$. For each $m \in \nat$ we look for coefficients $u_k$, $k=1, \dots, m$, such that the function
	$$
	u\om(x) = \sum_{k=1}^m u_k e_k(x)
	$$
	satisfies the identity
	\begin{equation}\label{reqm}
		\begin{aligned}
			&\io \Bigg(\bigg(\frac1\tau G_j(x,u\om) + \frac1{1+\tau} u\om\bigg)e_k\dd x+ \kappa(x,G_j(x,u\om))(\nabla u\om + \bfnu)\cdot\nabla e_k\Bigg) \dd x\\ 
			&\hspace{17mm} + \ipo b^*(x)(u\om - u^*_j) e_k \dd s(x) - \io a_j e_k\dd x = 0
		\end{aligned}
	\end{equation}
	for every $k=1, \dots, m$. The existence of a solution to \eqref{reqm} can be proved in a classical way using the Brouwer degree theory. An introduction to topological methods for solving nonlinear partial differential equations can be found in \cite[Chapter~V]{fuku}, but we will mainly refer to \cite{hei}, where detailed proofs are also provided.
	
	We begin by noting that we can define a continuous mapping $T: \real^m \to \real^m$ associating to $\boldsymbol{u} \coloneqq (u_1, \dots, u_m)$ the left-hand side of \eqref{reqm} for $k=1,\dots,m$. Let $\gamma \in [0,1]$, and consider the homotopy
	\begin{equation}\label{reqh}
		\begin{aligned}
			T_\gamma(\boldsymbol{u})_k =& \io \bigg(\frac \gamma\tau G_j(x,u\om) + \frac1{1+\tau} u\om\bigg)e_k \dd x\\
			&   + \io \Big((1-\gamma)\kappa_0 + \gamma\kappa(x,G_j(x,u\om))\Big)(\nabla u\om + \gamma\bfnu)\cdot\nabla e_k \dd x\\ 
			&  + \ipo b^*(x)(u\om - \gamma u^*_j) e_k \dd s(x) - \gamma \io a_j e_k\dd x.
		\end{aligned}
	\end{equation}
	Testing \eqref{reqh} by $u_k$ and summing up over $k=1,\dots,m$, by Hypothesis~\ref{hy2} we obtain
	$$
	\begin{aligned}
		T_\gamma(\boldsymbol{u})\cdot\boldsymbol{u} =& \io \bigg(\frac \gamma\tau G_j(x,u\om)u\om + \frac1{1+\tau}|u\om|^2\bigg) \dd x \\
		&  + \io \Big((1-\gamma)\kappa_0 + \gamma\kappa(x,G_j(x,u\om))\Big)\big(|\nabla u\om|^2 + \gamma\bfnu\cdot\nabla u\om\big) \dd x\\ 
		& + \ipo b^*(x)\big(|u\om|^2 - \gamma u^*_j u\om\big) \dd s(x) - \gamma \io a_j u\om\dd x \\
		&\ge \io \bigg(\frac1{1+\tau}|u\om|^2 + \kappa_0|\nabla u\om|^2\bigg) \dd x + \ipo b^*(x)|u\om|^2 \dd s(x) \\
		&  - \io \bigg(\frac1\tau|G_j(x,u\om)| + |a_j|\bigg)|u\om|\dd x - (\kappa_0+\kappa_1)\io |\nabla u\om| \dd x \\
		&  - \ipo b^*(x)|u_j^*||u\om|\dd s(x).
	\end{aligned}
	$$
	Hence, by Young's inequality and again Hypothesis~\ref{hy2}, we see that $T_\gamma(\boldsymbol{u})\cdot\boldsymbol{u}>0$ outside the ball $B\om_R \subset \real^m$ with a sufficiently large radius $R$ independent of $\gamma$ and $m$. Therefore, the equation $T_\gamma(\boldsymbol{u}) = 0$ has no solution on $\partial B\om_R$, so that by \cite[Theorem~3]{hei} the Brouwer degree $d(T_\gamma,B\om_R,0)$ is a constant for $\gamma \in [0,1]$. The mapping $T_0$ is linear, hence its degree is odd. In particular, $d(T_1,B\om_R,0) = d(T_0,B\om_R,0) \ne 0$, which implies (see \cite[Theorem~2]{hei}) that the equation $T_1(\boldsymbol{u}) = 0$, equivalently, \eqref{reqm}, has at least one solution in $B\om_R$.
	
	Testing \eqref{reqm} by $u_k$, summing up over $k=1,\dots,m$, and employing Hypothesis~\ref{hy2} we see that the sequence $\{u\om\}$ is uniformly bounded in $W^{1,2}(\Omega)$, hence it is compact in $L^2(\Omega)$. Let $u$ be the limit of any convergent subsequence of $\{u\om\}$. Passing to the limit in \eqref{reqm} as $m\to \infty$, we check that \eqref{req1} holds with $y=e_k$ for every $k \in \nat$. Since the system $\{e_k\}$ is complete in $L^2(\Omega)$, \eqref{req1} holds for $u = u_j$, and recalling \eqref{vj} we conclude that system~\eqref{dis1}-\eqref{dv1} has a solution.
\end{proof}

%%%%%%%%%%%%%%%%%%%%%%%%%%%%%%%%%%%%%%%

\section{Uniform upper bounds}\label{unif}

The situation is different from the case without gravity studied in \cite{perme}, where the upper bound could be derived in an elementary way from Hilpert's inequality following \cite{hilp}. Here, we propose a hysteresis variant of the Moser iteration procedure which is new to our knowledge. The price we pay is that we have to restrict the class of admissible Preisach densities $\phi(x,r,v)$ assuming that they have a sufficiently slow decay at infinity, see Hypothesis~\ref{hrho}.

We first test \eqref{dis1} by $\vrt = u_i$ and get, using \eqref{dv1},
\begin{align}\nonumber
	&\io \left(\frac1\tau\Big((G[u]_i - G[u]_{i-1})u_i + (v_i - v_{i-1})v_i\Big) + \kappa(x,G[u]_{i})\big(\nabla u_i + \bfnu\big)\cdot\nabla u_i\right)\dd x\\
	\label{des2}&\hspace{17mm}  +\frac1{\tau^2}\io |v_i - v_{i-1}|^2\dd x + \ipo b^*(x)(u_i - u^*_i) u_i \dd s(x) = 0
\end{align}
for all $i\in\{1,\dots,n\}$. Let us define the functions
\begin{equation}\label{psi}
	\psi(x,r,\xi) \coloneqq \int_0^\xi\phi(x,r,v)\dd v, \quad \Psi (x,r,\xi) \coloneqq \int_0^\xi v \phi(x,r,v)\dd v.
\end{equation}
In terms of the sequence $\xi^r_i(x) = \play_r[\lambda,u]_i$ we have
\begin{equation}\label{Gi}
	G[u]_i(x) = \bar G + \int_0^\infty \psi(x,r,\xi^r_i(x))\dd r.
\end{equation}
Choosing in \eqref{de4a} $z = 0$ and using the fact that the function $\psi$ in \eqref{psi} is increasing, we obtain in both cases $\xi^r_i\ge \xi^r_{i-1}$, $\xi^r_i\le \xi^r_{i-1}$ the inequalities
\begin{equation}\label{psi1}
	(\psi(x,r,\xi^r_i)-\psi(x,r,\xi^r_{i-1})) u_i \ge (\psi(x,r,\xi^r_i)-\psi(x,r,\xi^r_{i-1}))\xi^r_i \ge \Psi(x,r,\xi^r_i)-\Psi(x,r,\xi^r_{i-1}),
\end{equation}
and \eqref{des2} yields, by Hypothesis~\ref{hy2} and the inequality $(v_i - v_{i-1}) v_i \ge (v_i^2 - v_{i-1}^2)/2$,
\begin{equation}\label{energ}
	\begin{aligned}
		&\frac1\tau \io\bigg(\int_0^{\infty} \big(\Psi(x,r,\xi^r_i)-\Psi(x,r,\xi^r_{i-1})\big)\dd r + \frac{v_i^2-v_{i-1}^2}{2}\bigg) \dd x \\
		&+\kappa_0\io |\nabla u_i|^2\dd x + \ipo b^*(x)|u_i|^2 \dd s(x) \le C
	\end{aligned}
\end{equation}
for $i \in \{1,\dots,n\}$, with a constant $C>0$ independent of $\tau$. Summing up over $i$ and exploiting the fact that, by the definition of $\xi^r_0$ in \eqref{de5} and the assumptions on $\phi$ and $\lambda$ in Definition~\ref{dpr}, we have
\begin{equation}\label{enini}
	\int_0^{\infty} \Psi(x,r,\xi^r_0(x)) \dd r = \int_0^{\Lambda}\int_0^{\lambda(x,r)} v\phi(x,r,v) \dd v\dd r \le \frac{\phi_1}{2} \int_0^\Lambda \lambda^2(x,r) \dd r \le C,
\end{equation}
and we are assuming $v_0 \in L^\infty(\Omega)$, we get the estimate
\begin{equation}\label{energy}
	\max_{i=1,\dots,n}\io |v_i|^2 \dd x + \tau\sumiz \left(\io |\nabla u_i|^2\dd x + \ipo b^*(x)|u_i|^2 \dd s(x)\right) \le C
\end{equation}
with a constant $C>0$ independent of $\tau$.

For parameters $R>1$ and $k>1$ to be specified later, we define the function
\begin{equation}\label{fpr}
	\fpr(u) \coloneqq \left\{
	\begin{array}{ll}
		u |u|^{2k} & \for \ |u| \le R,\\[2mm]
		(2k+1)R^{2k} u - 2kR^{2k+1} & \for \ u > R,\\[2mm]
		(2k+1)R^{2k} u + 2kR^{2k+1} & \for \ u < -R.
	\end{array}\right.
\end{equation}
The function $\fpr$ is odd, increasing, continuously differentiable, and its derivative has the form
\begin{equation}\label{fpr2}
	\fpr'(u) = \left\{
	\begin{array}{ll}
		(2k+1) |u|^{2k} & \for \ |u| \le R,\\[2mm]
		(2k+1) R^{2k} & \for \ |u|> R.
	\end{array}\right.
\end{equation}
We test \eqref{dis1} with $\vrt = \fpr(u_i)$, which is an admissible test function, and we obtain
\begin{align}\nonumber
	&\io \left(\frac1\tau(G[u]_i {-} G[u]_{i-1}) \fpr(u_i) + (v_i {-} v_{i-1})\fpr(u_i) + \fpr'(u_i)\kappa(x,G[u]_{i})\big(\nabla u_i {+} \bfnu\big)\cdot\nabla u_i \right)\dd x\\
	\label{desp}&\hspace{17mm} + \ipo b^*(x)(u_i - u^*_i) \fpr(u_i) \dd s(x) = 0.
\end{align}
For every $\alpha, \beta \in \real$ and for every nondecreasing function $h:\real \to \real$, we have the elementary inequality
\begin{equation}\label{elem}
	\alpha \big(h(\alpha+\beta) - h(\beta)\big) \ge 0.
\end{equation}
Formula \eqref{elem} for $\alpha = (v_i - v_{i-1})/\tau$, $\beta = v_i$, $h = \fpr$ together with \eqref{dv1} enables us to estimate
\begin{equation}\label{esvi}
	(v_i - v_{i-1})\fpr(u_i) \ge (v_i - v_{i-1})\fpr(v_i) \ge \hukr(v_i) - \hukr(v_{i-1}),
\end{equation}
where we put
\begin{equation}\label{huk}
	\hukr(v) = \int_0^v \fpr(z)\dd z \ge 0.
\end{equation}
The next term in \eqref{desp} can be estimate from below by means of Young's inequality
\begin{equation}\label{esi1}
	\fpr'(u_i)\kappa(x,G[u]_{i})\big(\nabla u_i + \bfnu\big)\cdot\nabla u_i \ge a \fpr'(u_i)|\nabla u_i|^2 - b \fpr'(u_i)
\end{equation}
with positive constants $a,b$ independent of $R$ and $k$. To estimate the boundary term in \eqref{desp} from below, we first notice that for all $u\in \real$ we have, by virtue of \eqref{fpr},
\begin{equation}\label{esi2}
	\frac{|\fpr(u)|^{(2k+2)/(2k+1)}}{u \fpr(u)} = \frac{|\fpr(u)|^{1/(2k+1)}}{|u|} \le 1.
\end{equation}
Since $u^*$ is bounded by Hypothesis~\ref{hy2}, we may use \eqref{esi2} and Young's inequality with conjugate exponents $2k+2$ and $(2k+2)/(2k+1)$ to check that there exists a constant $C>0$ independent of $R$ and $k$ such that
\begin{equation}\label{esi3}
	(u_i - u^*_i) \fpr(u_i) \ge \frac12 u_i \fpr(u_i) - C^{2k+2}
\end{equation}
for all $i\in\{1,\dots,n\}$.

Finally, in order to estimate the time-discrete hysteresis term in \eqref{desp}, we define the function
\begin{equation}\label{psip}
	\Psi_{k,R} (x,r,\xi) \coloneqq \int_0^\xi \fpr(v) \phi(x,r,v)\dd v.
\end{equation}
Similarly as in \eqref{psi1}, we have
\begin{align}\nonumber
	(\psi(x,r,\xi^r_i)-\psi(x,r,\xi^r_{i-1})) \fpr(u_i) &\ge (\psi(x,r,\xi^r_i)-\psi(x,r,\xi^r_{i-1}))\fpr(\xi^r_i)\\ \label{psip1}
	&\ge \Psi_{k,R}(x,r,\xi^r_i)-\Psi_{k,R}(x,r,\xi^r_{i-1}).
\end{align}
Summing up in \eqref{desp} over $i = 1, \dots, n$ and using the above estimates \eqref{esvi}, \eqref{esi1}, \eqref{esi3}, and~\eqref{psip1}, we get, using again an estimate of the initial step at $i=0$ similar to \eqref{enini} and the assumption $v_0 \in L^\infty(\Omega)$,
\begin{align}\nonumber
	& \max_{i=1,\dots,n}\io\int_0^\infty \Psi_{k,R}(x,r,\xi^r_i)\dd r\dd x + a\tau\sumin\Big(\io \fpr'(u_i)|\nabla u_i|^2\dd x + \ipo b^*(x)u_i \fpr(u_i)\dd s(x)\Big)\\ \label{esi4}
	& \qquad\qquad \le C\Big(C^{2k} + \tau\sumin\io \fpr'(u_i) \dd x\Big)
\end{align}
with constants $a>0$ and $C>0$ independent of $\tau$, $k$, and $R$.

For $u\in \real$ put
\begin{equation}\label{hatf}
	\fhr(u)\coloneqq \left\{
	\begin{array}{ll}
		u |u|^{k} & \for \ |u| \le R,\\[2mm]
		(k+1)R^{k} u - kR^{k+1} & \for \ u > R,\\[2mm]
		(k+1)R^{k} u + kR^{k+1} & \for \ u < -R.
	\end{array}\right.
\end{equation}
We have
\begin{align}
	|\nabla \fhr(u_i)|^2 &= \left\{
	\begin{array}{ll}
		(k+1)^2 |u_i|^{2k} |\nabla u_i|^2 & \for \ |u_i|<R,\\[2mm]
		(k+1)^2 R^{2k} |\nabla u_i|^2 & \for \ |u_i| \ge R,
	\end{array}\right. \label{nabVkR}\\[2mm]
	|\fhr(u_i)|^2 &= \left\{
	\begin{array}{ll}
		|u_i|^{2k+2}  & \for \ |u_i|<R,\\[2mm]
		(k+1)^2 R^{2k}|u_i|^2 + k^2 R^{2k+2} - 2k(k+1)R^{2k+1}|u_i| \ge R^{2k+2} & \for \ |u_i| \ge R.
	\end{array}\right. \label{VkR}
\end{align}
Then
\begin{align}
	\fpr'(u_i)|\nabla u_i|^2 &= \frac{2k+1}{(k+1)^2}\big|\nabla \fhr(u_i)\big|^2, \label{UV1} \\[2mm]
	u_i \fpr(u_i) &\ge \frac{2k+1}{(k+1)^2} \big|\fhr(u_i)\big|^2. \label{UV2}
\end{align}
We define an equivalent norm of an element $w$ from the Sobolev space $W^{1,2}(\Omega)$ by the formula
$$
\|w\|_{1,2} \coloneqq \Big(a\io |\nabla w|^2\dd x + \ipo b^*(x)|w|^2\dd s(x)\Big)^{1/2}
$$
and using \eqref{UV1}-\eqref{UV2} rewrite inequality \eqref{esi4} in the form
\begin{equation}\label{esi6}
	\max_{i=1,\dots,n}\io\int_0^\infty \Psi_{k,R}(x,r,\xi^r_i)\dd r\dd x + \frac{\tau}{k+1}\sumin \|\fhr(u_i)\|_{1,2}^2\le C\Big(C^{2k} + \tau\sumin\io \fpr'(u_i) \dd x\Big).
\end{equation}
We define
\begin{equation}\label{ui}
	w_i(x) \coloneqq \min\{R, |u_i(x)|\}, \quad \uipr(x) \coloneqq w_i^k(x).
\end{equation}

With this notation, by \eqref{fpr2} we can rewrite the term at the right-hand side of \eqref{esi6} as
\begin{equation}\label{fpri}
	\io \fpr'(u_i) \dd x = (2k+1)\io |\uipr(x)|^2 \dd x.
\end{equation}
Let us now compare the $W^{1,2}$-norm of $\uipr$ with the $W^{1,2}$-norm of $V_{k,R}(u_i)$ on the left-hand side 
of~\eqref{esi6}. We have
$$
|\nabla \uipr|^2 = \left\{
\begin{array}{ll}
	k^2 |u_i|^{2k-2} |\nabla u_i|^2 & \for \ |u_i|<R\\[2mm]
	0 & \for \ |u_i| \ge R,
\end{array}\right.
\qquad
|\uipr|^2 = \left\{
\begin{array}{ll}
	|u_i|^{2k}  & \for \ |u_i|<R\\[2mm]
	R^{2k} & \for \ |u_i| \ge R,
\end{array}\right.
$$
and, by Young's inequality with conjugate exponents $k$ and $k/(k-1)$,
$$
k^2 |u_i|^{2k-2} \le k^2\left(\frac1k + \frac{k-1}k |u_i|^{2k}\right) \le k + (k+1)^2|u_i|^{2k}.
$$
Hence, recalling \eqref{nabVkR}-\eqref{VkR}, we estimate
\begin{equation}\label{w12}
	\|\uipr\|_{1,2}^2 \le \|\fhr(u_i)\|_{1,2}^2+ k \|u_i\|_{1,2}^2
\end{equation}
for $i=1, \dots, n$. From \eqref{esi6}, \eqref{fpri}, and \eqref{energy} we thus get
\begin{equation}\label{esi7}
	\max_{i=1,\dots,n}\io\int_0^\infty \Psi_{k,R}(x,r,\xi^r_i)\dd r\dd x + \frac{\tau}{k+1}\sumin \|\uipr\|_{1,2}^2\le C_0\Big(C^{2k} + (k+1)\tau\sumin\io \big|\uipr\big|^2 \dd x\Big),
\end{equation}
for some constants $C_0,C>0$ independent of $\tau$, $k$, and $R$. To simplify the notation, we denote by $|\cdot|_q$ the norm in $L^q(\Omega)$. There exists a constant $K>0$ such that for every $\sigma\in (0,1)$ the interpolation inequality
\begin{equation}\label{ipol}
	|w|_2^2 \le K\big(\sigma^{-N}|w|_1^2 + \sigma^2 \|w\|_{1,2}^2\big)
\end{equation}
holds for every $w \in W^{1,2}(\Omega)$, see \cite{bin}. Choosing $\sigma = (C_0 K)^{-1/2}(1+k)^{-1}$, we apply this inequality in~\eqref{esi7} and obtain
\begin{equation}\label{moser1}
	\max_{i=1,\dots,n}\io\int_0^\infty \Psi_{k,R}(x,r,\xi^r_i)\dd r\dd x\le C\left(C^{2k} + (k+1)^{N+1}\,\tau \sumin\left(\io \big|\uipr\big| \dd x\right)^2\right),
\end{equation}
with a constant $C>0$ depending on $C_0$ and $K$.

So far, this has been a standard Moser argument. The crucial point in the derivation of an upper bound for the time-discrete problem is to find an efficient lower bound for the hysteresis term on the left-hand side of \eqref{moser1}. By \eqref{psip} we have for $x \in \Omega$ that
\begin{equation}\label{intpsi1}
	\int_0^\infty\Psi_{k,R}(x,r,\xi^r_i(x))\dd r = \int_0^\infty\int_0^{\xi^r_i(x)}\fpr(v) \phi(x,r,v)\dd v \dd r,
\end{equation}
where the function $\fpr$ is defined in \eqref{fpr}. Assume first that $u_i(x) > 0$. By \eqref{de4a} for all $r>0$ we have
$$
\int_0^{\xi^r_i(x)}\fpr(v) \phi(x,r,v)\dd v \ge \int_0^{(\xi^r_i(x))^+}\fpr(v) \phi(x,r,v)\dd v \ge \int_0^{(u_i(x) - r)^+}\fpr(v) \phi(x,r,v)\dd v,
$$
where $(\cdot)^+$ denotes the positive part. From \eqref{intpsi1} we thus get
\begin{equation}\label{intpsi2}
	\int_0^\infty\Psi_{k,R}(x,r,\xi^r_i(x))\dd r \ge \int_0^{u_i(x)}\int_0^{u_i(x) - r}\fpr(v) \phi(x,r,v)\dd v \dd r.
\end{equation}
In the case $u_i(x) < 0$ we argue similarly and obtain the formula which is valid for both cases
\begin{align}\nonumber
	\int_0^\infty\Psi_{k,R}(x,r,\xi^r_i(x))\dd r & \ge \int_0^{|u_i(x)|}\int_0^{|u_i(x)| - r}\fpr(v) \phi(x,r,v)\dd v \dd r\\[2mm] \label{intpsi3}
	&  \ge \phi_0 \int_0^{|u_i(x)|}\int_0^{|u_i(x)| - r}\fpr(v)\max\{1, r+v\}^{-m}\dd v\dd r \eqqcolon \phi_0 A_{k,R}(|u_i(x)|),
\end{align}
according to Hypothesis~\ref{hrho}. Our goal is to show that the function
\begin{equation}\label{AA}
	\begin{aligned}
		A_{k,R}(u) &= \int_0^{u}\int_0^{u - r}\max\{1, r+v\}^{-m}\fpr(v)\dd v\dd r\\
		& = \int_0^u\int_0^{u-v} \max\{1, r+v\}^{-m}\fpr(v)\dd r\dd v \\
		&= \int_0^u\fpr(v) \int_v^u\max\{1, z\}^{-m}\dd z\dd v,
	\end{aligned}
\end{equation}
of the argument $u\ge 0$, where we have first used Fubini's theorem and then substituted $z=r+v$, is dominant for $k > (m-3)/2$ over the function
$$
B_{k,R}(u) \coloneqq \min\{R,u\}^{2k+3 - m},
$$
that is, there exists a constant  $C_k$ independent of $R$ and possibly dependent on $k$ such that
\begin{equation}\label{AB}
	B_{k,R}(u) \le 1 + C_k A_{k,R}(u)
\end{equation}
for all $u \ge 0$. We have in particular
\begin{align*}
	A'_{k,R}(u) &= \max\{1, u\}^{-m}\int_0^u\fpr(v)\dd v, \\[2mm]
	B'_{k,R}(u) &= (2k+3-m) u^{2k+2-m} \ \for \ u<R, \quad B'_{k,R}(u) = 0\  \for u>R.
\end{align*}
We obviously have \eqref{AB} for $u \le 1$. Indeed, in this case $u \le 1 < R$, and we can compute
\begin{align*}
	A_{k,R}(u) &= \int_0^u \fpr(v)(u-v) \dd v = \frac{u^{2k+3}}{(2k+2)(2k+3)}, \\
	B_{k,R}(u) &= u^{2k+3-m},
\end{align*}
so that \eqref{AB} holds with $C_k = 6(k+1)^2$. For $u > 1$ it suffices to prove that
\begin{equation}\label{ABd}
	B'_{k,R}(u) \le C_k A'_{k,R}(u).
\end{equation}
Indeed, then \eqref{AB} follows from the formula $B_{k,R}(u) - B_{k,R}(1) \le C_k (A_{k,R}(u) - A_{k,R}(1))$ and the fact that~\eqref{AB} holds for $u=1$ from the previous step. Note that \eqref{ABd} holds automatically for $u>R$. For $u \in (1,R)$ we have by \eqref{fpr} that
$$
A'_{k,R}(u) = u^{-m} \int_0^u v^{2k+1}\dd v = \frac1{2k+2} u^{2k+2-m},
$$
hence \eqref{ABd} and \eqref{AB} hold with $C_k$ as above. Therefore, combining \eqref{intpsi3} and \eqref{AB}, in terms of the functions $w_i(x)$ defined in \eqref{ui}, estimate \eqref{moser1} implies
\begin{equation}\label{moser3}
	\max_{i=1,\dots,n}\left|w_i\right|_{2k+3-m}^{2k+3-m} \le C k^{N+3}\max\left\{C^{2k}, \tau\sumin\left|w_i\right|_{k}^{2k} \right\}
\end{equation}
for all $k > (m-3)/2$ with a constant $C>0$ independent of $k$ and $R$.

We are ready to start the Moser iterations. Note that we can assume $m\ge3$. Indeed, if \eqref{c2b} holds with $m<3$, it will certainly be true with $m=3$. Since we have an anisotropic norm on the right-hand side of \eqref{moser3}, we have to replace the left-hand side by an expression which is compatible with the right-hand side. We shall see that, since
$$
\left(\tau\sumin\left|w_i\right|_{2k+3-m}^{4k+6-2m}\right)^{1/2} \le \left(\tau n \max_{i=1,\dots,n} \Big(\left|w_i\right|_{2k+3-m}^{2k+3-m}\Big)^2\right)^{1/2},
$$
the right choice is
\begin{equation}\label{moser4}
	\max\left\{C^{2k+3-m}, \left(\tau\sumin\left|w_i\right|_{2k+3-m}^{4k+6-2m}\right)^{1/2}\right\} \le C_T k^{N+3}\max\left\{C^{2k}, \tau\sumin\left|w_i\right|_{k}^{2k} \right\}
\end{equation}
with a constant $C_T$ depending on $T$. We now choose $k_0> m-3$ and define a sequence $k_j$ for $j \in \nat$ recurrently by the formula $k_{j} = 2k_{j-1} + 3 - m$, that is,
\begin{equation}\label{kj}
	k_j = 2^j K_m +m - 3, \quad K_m = k_0 +3 -m.
\end{equation}
From \eqref{moser4} we obtain
\begin{equation}\label{moser5}
	\left(\max\left\{C, \left(\tau\sumin\left|w_i\right|_{k_j}^{2k_j}\right)^{1/2k_j}\right\}\right)^{\omega_j} \le \left(C_T k_{j-1}^{N+3}\right)^{1/2k_{j-1}}\max\left\{C, \left(\tau\sumin\left|w_i\right|_{k_{j-1}}^{2k_{j-1}}\right)^{1/2 k_{j-1}} \right\}
\end{equation}
with exponent
$$
\omega_j = \frac{2k_{j-1}+3-m}{2k_{j-1}} = 1- \frac{m-3}{2k_{j-1}}.
$$
Note that $(m-3)/2k_{j-1} \in [0,1/2)$, so that $\omega_j \in (1/2,1]$. We define the quantities
$$
L_j \coloneqq \log\left(\max\left\{C, \left(\tau\sumin\left|w_i\right|_{k_j}^{2k_j}\right)^{1/2k_j} \right\}\right).
$$
Then from \eqref{moser5} we get the recurrent relation
\begin{equation}\label{ome}
	\omega_j L_{j} \le \frac1{2k_{j-1}} (\log C_T + (N+3) \log k_{j-1}) + L_{j-1}
\end{equation}
for all $j \in \nat$. We rewrite \eqref{ome} in the form
\begin{equation}\label{ome2}
	\left(\prod_{i=1}^j \omega_i\right) L_{j} \le \left(\prod_{i=1}^{j-1}\omega_i\right) \frac1{2k_{j-1}} (\log C_T + (N+3) \log k_{j-1}) + \left(\prod_{i=1}^{j-1}\omega_i\right) L_{j-1}.
\end{equation}
Summing up the above inequality over $j=2,\dots,k$ yields
\begin{equation}\label{ome3}
	L_k \le \left(\prod_{i=1}^{k} \frac{1}{\omega_i}\right) \sum_{j=2}^k \left(\prod_{i=1}^{j-1}\omega_i\right) \frac1{2k_{j-1}} (\log C_T + (N+3) \log k_{j-1}) + \left(\prod_{i=2}^{k} \frac{1}{\omega_i}\right) L_1.
\end{equation}
To estimate $L_1$, we observe that by virtue of \eqref{moser4} we have
\begin{equation}\label{moser6}
	\begin{aligned}
		\left(\tau\sumin\left|w_i\right|_{k_1}^{2k_1}\right)^{1/2k_1} &= \left(\left(\tau\sumin\left|w_i\right|_{2k_0+3-m}^{4k_0+6-2m}\right)^{1/2}\right)^{1/(2k_0+3-m)} \\
		&\le C\left(1 + \tau\sumin\left|w_i\right|_{k_0}^{2k_0}\right)^{1/(2k_0+3-m)}.
	\end{aligned}
\end{equation}
Poincar\'{e}'s inequality implies
\begin{equation}\label{vrk}
	|\fhr(u_i)|_2^2 \le C \|\fhr(u_i)\|_{1,2}^2,
\end{equation}
so that from \eqref{esi6} and \eqref{fpri} it follows that for all $k\ge 1$ we have
\begin{equation}\label{esi6a}
	\begin{aligned}
		\tau\sumin |w_i|_{2k+2}^{2k+2} &\le \tau\sumin|\fhr(u_i)|_2^2 \\
		&\le C \tau\sumin\|\fhr(u_i)\|_{1,2}^2 \\
		&\le C(k+1)\Big(C^{2k} + (2k+1)\tau\sumin |w_i|_{2k}^{2k}\Big).
	\end{aligned} 
\end{equation}
By induction over $k$ we prove that for $k_0 > m-3$ as above the estimate
\begin{equation}\label{esi6b}
	\tau\sumin |w_i|_{2k_0}^{2k_0} \le C
\end{equation}
holds true with a constant $C$ depending only on $m$ in Hypothesis~\ref{hrho} and on the data of the problem. Hence, by \eqref{moser6} and \eqref{esi6b} we obtain that $L_1 \le C$. Coming back to \eqref{ome3}, we note also that
$$
\frac{1}{\omega_i} = 1 + \frac{m-3}{2k_{i-1}+3-m},
$$
and since
$$
\prod_{i=1}^\infty \frac{1}{\omega_i} < \infty \ \Longleftrightarrow \ \sum_{i=1}^\infty \log\bigg(\frac{1}{\omega_i}\bigg) < \infty \ \Longleftrightarrow \ \sum_{i=1}^\infty \frac{m-3}{2k_{i-1}+3-m} < \infty,
$$
we conclude that $\prod_{i=1}^\infty 1/\omega_i < \infty$. We additionally have
$$
\prod_{i=1}^\infty \omega_i > 0 \ \Longleftrightarrow \ \sum_{i=1}^\infty \frac{m-3}{2k_{i-1}} < \infty,
$$
see \cite[Theorem~15.5]{rudin}, so that $\prod_{i=1}^\infty \omega_i > 0$. To summarize, we have
$$
\sum_{j=1}^\infty \frac1{2k_{j-1}} (\log C_T + (N+3) \log k_{j-1}) < \infty, \quad \prod_{i=1}^\infty\frac{1}{\omega_i} < \infty, \quad \prod_{i=1}^\infty\omega_i > 0,
$$
and we conclude from \eqref{ome3} that there exists a constant $L^*>0$ such that $L_k \le L^*$ for all $k\in \nat$, hence
\begin{equation}\label{Lam}
	|u_i(x)| \le U \coloneqq \expe^{L^*}
\end{equation}
for a.e.\ $x \in \Omega$ and all $i\in \{1,\dots,n\}$. By comparison in Eq \eqref{dv1}, we also have
\begin{equation}\label{vLam}
	|v_i(x)| \le C
\end{equation}
for a.e.\ $x \in \Omega$ and all $i\in \{1,\dots,n\}$, with a constant $C>0$ depending on $U$. Estimate \eqref{Lam} additionally implies $|\xi_i^r(x)| \le (\hat{U}-r)^+$ for $\hat{U}\coloneqq\max\{U,\Lambda\}$, and we can write in \eqref{Gi}
$$
G[u]_i(x) = \bar G + \int_0^{\hat{U}} \psi(x,r,\xi^r_i(x))\dd r.
$$
Hence, even if we do not assume the a priori boundedness of $G$ as in \eqref{irho}, by assumption \eqref{ge3a} we obtain
\begin{equation}\label{GiL}
	\quad |G[u]_i| \le C
\end{equation}
with a constant $C>0$ independent of $i$ and $\tau$.

%%%%%%%%%%%%%%%%%%%%%%%%%%%%%%%%%%%%%%%

\section{Convexity estimate}\label{conv}

Recall that the operator $G$ is convexifiable in the sense of Definition~\ref{dpc}, that is, for every $U>0$ there exists a twice continuously differentiable mapping $g:[-U, U] \to [-U, U]$ such that $g(0) = 0$, $0 < g_* \le g'(u) \le g^* < \infty$, $|g''(U)| \le \bar{g}$, and $G$ is of the form
\begin{equation}\label{ne0}
	G = P \circ g,
\end{equation}
where $P$ is a uniformly counterclockwise convex Preisach operator on $[-U,U]$. Let us fix $U$ from~\eqref{Lam} and the corresponding function $g$. The following result is a straightforward consequence of \cite[Proposition~3.6]{colli}.

\begin{proposition}\label{pc}
	Let $P$ be uniformly counterclockwise convex on $[-U,U]$, and let $f$ be an odd increasing function such that $f(0) = 0$. Then there exists $\beta>0$ such that for every sequence $\{w_i: i = -1, 0, \dots, n-1\}$ in $[-U,U]$ we have
	\begin{equation}\label{Pi}
		\begin{aligned}
			&\sumim (P[w]_{i+1} - 2P[w]_i + P[w]_{i-1})f(w_{i+1} - w_i) + \frac{P[w]_0 - P[w]_{-1}}{w_0 - w_{-1}}F(w_0 - w_{-1}) \\[2mm]
			&\qquad \ge \frac{\beta}{2}\sumim \Gamma(w_{i+1} - w_i),
		\end{aligned}
	\end{equation}
	where we set for $w\in \real$
	\begin{equation}\label{gf1}
		F(w) \coloneqq \int_0^w f(v)\dd v, \qquad \Gamma(w) \coloneqq |w|(wf(w) - F(w)) = |w|\int_0^{|w|}vf'(v)\dd v.
	\end{equation}
\end{proposition}

We need to define backward steps $u_{-1}$ and $v_{-1}$ satisfying the strong formulation of \eqref{dis1}-\eqref{dv1} for $i=0$, that is,
\begin{align}
	\frac1\tau\Big((G[u]_0(x) - G[u]_{-1}(x)) +(v_0(x)- v_{-1}(x))\Big) &= \dive\!\big(\kappa(x,\theta_0(x))\left(\nabla u_0(x) + \bfnu\right)\big) \ \mbox{ in } \Omega, \label{e7} \\
	v_{-1}(x) &= (1+\tau)v_0(x) - \tau u_0(x) \ \mbox{ in } \Omega, \label{e7a}
\end{align}
with boundary condition \eqref{c2a}.
Repeating the argument of \cite[Proposition~3.3]{colli}, we use assumptions~\eqref{ge3a} and \eqref{c2} to find for each $0<\tau <\phi_0(U)/2L^2$ functions $u_{-1}$ and $G[u]_{-1}$ satisfying
\begin{equation}\label{e7b}
	\frac1\tau(G[u]_0(x) - G[u]_{-1}(x)) = \dive\!\big(\kappa(x,\theta_0(x))\left(\nabla u_0(x) + \bfnu\right)\big) + v_0(x) - u_0(x) \ \mbox{ in } \Omega,
\end{equation}
as well as, thanks to \eqref{c1}, the estimate
\begin{equation}\label{inim}
	\frac1\tau |u_0(x) - u_{-1}(x)| \le C
\end{equation}
with a constant $C>0$ independent of $\tau$ and $x$. Finally, we construct $v_{-1}$ according to \eqref{e7a}.

We extend the discrete system \eqref{dis1}-\eqref{dv1} to $i=0$ and write it in the form
\begin{align}\nonumber
	&  \io \left(\frac1\tau\Big((P[w]_i - P[w]_{i-1})  + (v_i - v_{i-1})\Big)\vrt + \kappa(x,\theta_{i})\big(\nabla u_i  + \bfnu\big)\cdot\nabla\vrt\right)\dd x\\
	\label{dp1}& \hspace{2.5cm} + \ipo b^*(x)(u_i - u^*_i)\vrt \dd s(x) = 0,\\[2mm] \label{dvp1}
	&\hspace{3cm} \frac1\tau (v_i - v_{i-1}) + v_i = u_i,
\end{align}
with $w_i = g(u_i)$, $\theta_i = G[u]_i$, for $i\in \{0,1,\dots,n\}$ and for an arbitrary test function $\vrt \in W^{1,2}(\Omega)$. We proceed as in \cite{colli} and test the difference of \eqref{dp1} taken at discrete times $i+1$ and $i$
\begin{equation}\label{dp2}
	\begin{aligned} 
		&\io \frac1\tau\big(P[w]_{i{+}1}  - 2P[w]_i  +  P[w]_{i{-}1} +v_{i{+}1} - 2v_i  +  v_{i{-}1}\big)\vrt \dd x
		\\ &\qquad + \io \Big(\kappa(x,\theta_{i{+}1})\nabla u_{i{+}1} - \kappa(x,\theta_i) \nabla u_i  + (\kappa(x,\theta_{i{+}1}) - \kappa(x,\theta_{i}))\bfnu\Big) \cdot\nabla\vrt\dd x\\
		&\qquad + \ipo b^*(x)(u_{i+1} -u_i)\vrt \dd s(x) = \ipo b^*(x)(u^*_{i+1} - u^*_i)\vrt \dd s(x)
	\end{aligned}
\end{equation}
by $\vrt = f(w_{i+1} - w_i)$ with 
\begin{equation}\label{dp2f}
	f(w) \coloneqq  \frac{w}{\tau + |w|}.
\end{equation}
In agreement with \eqref{gf1}, we have
\begin{align}\label{ff}
	F(w) &= |w| - \tau\log\left(1 + \frac{|w|}{\tau}\right), \\ \label{fgamma}
	\Gamma(w) &= \tau |w|\left(\log\left(1 + \frac{|w|}{\tau}\right) - \frac{|w|}{\tau + |w|}\right).
\end{align}
The hysteresis term is estimated from below by virtue of Proposition~\ref{pc} as follows:
\begin{equation}\label{dp3}
	\begin{aligned}
		\frac1\tau \sumim &(P[w]_{i+1} - 2P[w]_i + P[w]_{i-1})f(w_{i+1} - w_i) + \frac{1}{\tau}\frac{P[w]_0-P[w]_{-1}}{w_0-w_{-1}}F(w_0-w_{-1}) \\
		&\ge \frac{\beta}{2\tau}\sumim \Gamma(w_{i+1} - w_i).
	\end{aligned}
\end{equation}
Note that for $|w| \ge \tau(\expe^2 - 1)$, we have
$$
\frac{|w|}{\tau + |w|} < 1 \le \frac12\log\left(1 + \frac{|w|}{\tau}\right),
$$
so that
$$
\Gamma(w) \ge \frac{\tau}{2}|w|\log\left(1 + \frac{|w|}{\tau}\right).
$$
We denote by $J$ the set of all $i \in \{0,1,\dots, n-1\}$ such that $|w_{i+1} - w_i| \ge \tau(\expe^2 - 1)$, and by $J^\perp$ its complement. We thus have
\begin{align}\label{dp3a}
	\frac12\sum_{i\in J} |w_{i+1} - w_i|\log\left(1 + \frac{|w_{i+1} {-} w_i|}{\tau}\right) &\le \frac1\tau\sum_{i\in J}\Gamma(w_{i+1} - w_i), \\ \label{dp3c}
	\frac12\sum_{i\in J^\perp} |w_{i+1} - w_i|\log\left(1 + \frac{|w_{i+1} {-} w_i|}{\tau}\right) &\le T(\expe^2 - 1),
\end{align}
hence
$$
\frac{\beta}{2\tau}\sumim \Gamma(w_{i+1} - w_i) \ge \frac\beta{4}\sumim |w_{i+1} - w_i|\log\left(1 + \frac{|w_{i+1} {-} w_i|}{\tau}\right) - C
$$
with a constant $C>0$ independent of $x$ and $\tau$. Moreover, for $w_0 \neq w_{-1}$ we have
$$
0 < \frac{F(w_0-w_{-1})}{|w_0-w_{-1}|} \le 1,
$$
which yields, together with identity \eqref{e7b} and assumption \eqref{c1},
\begin{equation}\label{dp3b}
	0 < \frac{1}{\tau}\left|\frac{P[w]_0-P[w]_{-1}}{w_0-w_{-1}}\right| F(w_0-w_{-1}) \le C
\end{equation}
with a constant $C>0$ independent of $x$ and $\tau$. For $w_0 = w_{-1}$, we interpret $(P[w]_0-P[w]_{-1})/(w_0-w_{-1})$ as $B'_+(w_{-1})$ or $B'_-(w_{-1})$, according to the notation for the Preisach branches introduced after Proposition~\ref{pc1}, see \cite{colli} for more details.
From \eqref{dp3}--\eqref{dp3b} we thus get
\begin{equation}\label{dp4}
	\frac1\tau \sumim (P[w]_{i+1} - 2P[w]_i + P[w]_{i-1})f(w_{i+1} {-} w_i)
	\ge \frac{\beta}{4}\sumim |w_{i+1} {-} w_i|\log\left(1 + \frac{|w_{i+1} {-} w_i|}{\tau}\right) - C
\end{equation}
with a constant $C>0$ independent of $x$ and $\tau$. 

We further have by \eqref{dvp1} and by the monotonicity of $f$ and $g$ that
\begin{align}\nonumber
	&\frac1\tau\big(v_{i{+}1} - 2v_i  +  v_{i{-}1}\big)f(w_{i+1} {-} w_i)
	= \Big((u_{i{+}1} - u_i) - (v_{i{+}1} - v_i)\Big)f(w_{i+1} {-} w_i)\\ \label{difv}
	&\qquad \ge \tau (v_{i+1} - u_{i+1})f(w_{i+1} {-} w_i) \ge -C\tau,
\end{align}
where in the last step we employed estimates \eqref{Lam}-\eqref{vLam}, and the constant $C>0$ depends on $U$ 
from~\eqref{Lam}.

The boundary source term
$$
\ipo b^*(x)(u^*_{i+1} - u^*_i)f(w_{i+1} - w_i) \dd s(x)
$$
is bounded by a constant by virtue of Hypothesis~\ref{hy2}, while the boundary term on the left-hand side
$$
\ipo b^*(x)(u_{i+1} - u_i)f(w_{i+1} - w_i) \dd s(x)
$$
is non-negative by monotonicity of both functions $f$ and $g$. Using \eqref{dp4}-\eqref{difv} we thus get from \eqref{dp2}
\begin{align}\nonumber
	&\sumim \io |w_{i+1} - w_i|\log\left(1 + \frac{|w_{i+1} {-} w_i|}{\tau}\right)\dd x\\ \label{dp5}
	&\qquad + \sumim\io \Big(\kappa(x,\theta_{i{+}1})\nabla u_{i{+}1}{-}\kappa(x,\theta_i) \nabla u_i {+}(\kappa(x,\theta_{i{+}1}){-} \kappa(x,\theta_{i}))\bfnu\Big)\cdot \nabla f(w_{i+1} - w_i)\dd x \le C
\end{align}
with a constant $C>0$ independent of $\tau$. We further have
\begin{align*}
	&\Big(\kappa(x,\theta_{i+1})\nabla u_{i+1}-\kappa(x,\theta_i) \nabla u_i\Big)\cdot \nabla f(w_{i+1} {-} w_i)\\
	&= f'(w_{i+1} {-} w_i)\Bigg(\Big(\big(\kappa(x,\theta_{i+1}) {-} \kappa(x,\theta_{i})\big)\nabla u_i + \kappa(x,\theta_{i+1})\nabla (u_{i+1} {-} u_i)\Big)\\
	&\qquad \times \Big(\big(g'(u_{i+1}) {-} g'(u_{i})\big)\nabla u_i + g'(u_{i+1})\nabla (u_{i+1} {-} u_i)\Big)\Bigg)\\
	&= f'(w_{i+1} {-} w_i)\Bigg(g'(u_{i+1})\kappa(x,\theta_{i+1})|\nabla (u_{i+1} {-} u_i)|^2 + \big(g'(u_{i+1}) {-} g'(u_{i})\big)\big(\kappa(x,\theta_{i+1}) {-} \kappa(x,\theta_{i})\big)|\nabla u_i|^2\\
	&\qquad + \Big(\kappa(x,\theta_{i+1})\big(g'(u_{i+1}) {-} g'(u_{i})\big) + g'(u_{i+1})\big(\kappa(x,\theta_{i+1}) {-} \kappa(x,\theta_{i})\big)\Big)\nabla u_i\cdot \nabla (u_{i+1} {-} u_i)\Bigg).
\end{align*}
The functions $\kappa$ and $g'$ are bounded and Lipschitz continuous, and
\begin{equation}\label{dp9}
	f'(w_{i+1} - w_i) = \frac{\tau}{(\tau + |w_{i+1} - w_i|)^2}.
\end{equation}
Moreover, since $\theta_i = P[w]_i$ admits a representation similar to \eqref{de3}, by Hypothesis~\ref{hy2}, estimate \eqref{GiL}, and the Lipschitz continuity of the time-discrete play implied by \eqref{de4a}, we obtain
\begin{equation}\label{dp10}
	|\kappa(x,\theta_{i+1}) {-} \kappa(x,\theta_{i})| \le C|\theta_{i+1} {-} \theta_i| \le C|w_{i+1} {-} w_i|,
\end{equation}
whereas, from assumption \eqref{hg},
\begin{equation}\label{dp11}
	|g'(u_{i+1}) {-} g'(u_{i})| \le \bar{g}(U)|u_{i+1} {-} u_i| \le \bar{g}(U)(g_*(U))^{-1}|w_{i+1} {-} w_i|.
\end{equation}
Thus, employing the Cauchy-Schwarz and Young's inequalities and assumption \eqref{hg} on $g$, we conclude that there exist constants $\delta>0$ and $K>0$ independent of $\tau$ such that
\begin{equation}\label{dp8}
	\begin{aligned}
		\Big(\kappa(x,\theta_{i+1})\nabla u_{i+1}&-\kappa(x,\theta_i) \nabla u_i\Big)\cdot \nabla f(w_{i+1} {-} w_i) \\
		&\ge \delta f'(w_{i+1} {-} w_i)|\nabla (u_{i+1} {-} u_i)|^2 - K\frac{\tau}{(\tau + |u_{i+1} {-} u_i|)^2}|u_{i+1} {-} u_i|^2|\nabla u_i|^2 \\
		&\ge \delta f'(w_{i+1} {-} w_i)|\nabla (u_{i+1} {-} u_i)|^2 -\tau K |\nabla u_i|^2.
	\end{aligned}
\end{equation}
The remaining term $(\kappa(x,\theta_{i{+}1}){-} \kappa(x,\theta_{i}))\bfnu\cdot \nabla f(w_{i+1} - w_i)$ in \eqref{dp5} can be treated as follows. By \eqref{hg} we have
$$
\begin{aligned}
	|\nabla f(w_{i+1} - w_i)| &= f'(w_{i+1} - w_i)\big|g'(u_{i+1})\nabla u_{i+1} - g'(u_i)\nabla u_i\big| \\
	&= f'(w_{i+1} - w_i)\big|(g'(u_{i+1})-g'(u_i))\nabla u_i + g'(u_{i+1})(\nabla u_{i+1}-\nabla u_i)\big| \\
	&\le C f'(w_{i+1} - w_i)\Big(|u_{i+1}-u_{i}||\nabla u_i| + |\nabla(u_{i+1} - u_{i})|\Big),
\end{aligned}
$$
and employing \eqref{dp10} and arguing as in \eqref{dp8} we get
\begin{equation}\label{dp7}
	|\kappa(x,\theta_{i{+}1}){-} \kappa(x,\theta_{i})| |\nabla f(w_{i+1} - w_i)|
	\le \frac\delta{2}f'(w_{i+1} {-} w_i)|\nabla (u_{i+1} {-} u_i)|^2 + C \tau\left(1{+} |\nabla u_i|^2\right)
\end{equation}
with some constant $C>0$ independent of $\tau$. As a consequence of \eqref{hg}, \eqref{dp5}--\eqref{dp7}, and \eqref{energy}, we thus have the crucial estimate
\begin{equation}\label{dp6}
	\sumim \io |u_{i+1} - u_i|\log\left(1 + \frac{|u_{i+1} {-} u_i|}{\tau}\right)\dd x \le C\left(1+ \tau \sumiz \io|\nabla u_i|^2 \dd x\right) \le C
\end{equation}
with a constant $C>0$ independent of $\tau$.

%%%%%%%%%%%%%%%%%%%%%%%%%%%%%%%%%%%%%%%

\section{Proof of Theorem~\ref{t1}}\label{proof}

The estimates we have derived in Sections~\ref{unif} and \ref{conv} are sufficient for passing to the limit as $\tau\to 0$. For the time-discrete sequence $\{u_i\}$, we repeat literally the compactness argument in Sobolev and Orlicz spaces for piecewise linear and piecewise constant interpolates from \cite[Sections~5~and~6]{perme}. The most delicate step is the convergence of the hysteresis terms. Estimates \eqref{energy}, \eqref{Lam}, and \eqref{dp6} guarantee that the piecewise linear interpolates are bounded in the space
$$
\mathcal{B} \coloneqq L^\infty(\Omega\times (0,T))\cap \XX \cap L^1(\Omega; W^{1,\Phi_{log}}(0,T)),
$$
where
$$
\XX = \{u \in L^2(\Omega\times (0,T)) : \nabla u \in L^2(\Omega\times (0,T);\real^N)\}$$
and
$$
W^{1,\Phi_{log}}(0,T) = \{u \in L^1(0,T): \dot u \in L^{\Phi_{log}}(0,T)\}.
$$
From \cite[Proposition~6.1]{perme}, the space $\mathcal{B}$ is compactly embedded in $L^1(\Omega;C[0,T])$, and this is enough to ensure the convergence of the hysteresis terms in view of Proposition~\ref{pc1}.
As for the piecewise linear and piecewise constant interpolates of the sequence $\{v_i\}$, their convergence follows from the (strong) convergence of the interpolates of $\{u_i\}$ (compare in \eqref{dv1}) combined with the Lebesgue dominated convergence theorem. The limits are the desired solution to Problem~\eqref{pde}--\eqref{ie3}, and this concludes the proof of Theorem~\ref{t1}.

%%%%%%%%%%%%%%%%%%%%%%%%%%%%%%%%%%%%%%%%%%%%%%%%

\section{Conclusions}

In this paper, we have investigated a model for unsaturated flow in a viscoelastic porous medium exhibiting degenerate hysteresis in the pressure-saturation relationship, coupled with gravity-driven moisture flux. This extends previous work on degenerate diffusion in porous media with hysteresis by incorporating the effects of solid matrix deformation and gravity, which introduce significant challenges in the analysis. The key difficulty lies in obtaining a priori bounds for the solutions, which we address by employing a hysteresis variant of the Moser iteration technique. Specifically, we require a condition on the Preisach density, namely, that it decays sufficiently slowly at infinity (Hypothesis~\ref{hrho}). This condition allows us to establish a uniform upper bound for the solutions. Building upon this estimate, we leverage the convexity and compactness arguments developed in \cite{perme} to prove the existence of a weak solution to the coupled system. Our approach involves a time discretization scheme, the derivation of estimates independent of the time step, and a passage to the limit.

The results presented here contribute to a deeper understanding of flow phenomena in deformable porous media, relevant to various applications in hydrogeology and related fields. Future work could explore the impact of more complex constitutive models for the solid matrix, as well as the development of efficient numerical methods for the computation of solutions.

%%%%%%%%%%%%%%%%%%%%%%%%%%%%%%%%%%%%%%%%%%%%%%%%

\end{document}